\newtheorem{theorem}{Theorem}[section]
\newtheorem{lemma}[theorem]{Lemma}
\newtheorem{proposition}[theorem]{Proposition}
\newtheorem{corollary}[theorem]{Corollary}
\theoremstyle{definition}
\newtheorem{example}[theorem]{Example}
\newtheorem*{acknowledgement}{Acknowledgments}
\theoremstyle{remark}
\newtheorem{remark}[theorem]{Remark}
\numberwithin{equation}{section}
\DeclareMathOperator{\Tor}{Tor}
\DeclareMathOperator{\pd}{pd}
\DeclareMathOperator{\Ker}{Ker}
\DeclareMathOperator{\Image}{Im}
\DeclareMathOperator{\reg}{reg}
\DeclareMathOperator{\lcm}{lcm}
\begin{document}
\title[Non-vanishing of Betti numbers of edge ideals]
  {Non-vanishing of Betti numbers of edge ideals and complete bipartite subgraphs}
\author[Kyouko Kimura]{Kyouko Kimura}
\address{Department of Mathematics, Graduate School of Science, 
          Shizuoka University, 836 Ohya, Suruga-ku, Shizuoka 422-8529, Japan}
\email{skkimur@ipc.shizuoka.ac.jp}
\subjclass[2010]{Primary 13D02, 13F55; Secondary 05C99}
\date{}
\keywords{edge ideals, graded Betti numbers, projective dimension, 
  complete bipartite graphs, Lyubeznik resolutions} 
\begin{abstract}
  Given a finite simple graph one can associate the edge ideal. 
  In this paper we prove that a graded Betti number of the edge ideal 
  does not vanish if the original graph contains a set of complete bipartite 
  subgraphs with some conditions. 
  Also we give a combinatorial description for the projective dimension 
  of the edge ideals of unmixed bipartite graphs. 
\end{abstract}
\maketitle

\section{Introduction}
Let $G$ be a finite simple graph, i.e., 
a finite graph with no loop and no multiple edge. 
We denote by $V=V(G)$, the vertex set of $G$, 
and by $E(G)$, the edge set of $G$. Put $N = \# V$. 
Let $S := K[V]$ be the polynomial ring over a field $K$ 
whose variables are vertices of $G$. 
We consider the standard ${\mathbb{N}}^N$-grading on $S$ 
unless otherwise specified. 
With a graph $G$, we can associate a quadratic squarefree monomial ideal 
$I(G)$ of $S$: 
\begin{displaymath}
  I(G) = (u v \; : \; \{ u,v \} \in E(G)), 
\end{displaymath}
which is called the \textit{edge ideal} of $G$. 
The study of edge ideals was inspired by Villarreal \cite{Villarreal90}. 
The main direction of the study of edge ideals is to investigate 
relations between algebraic properties of edge ideals and 
combinatorial ones of original graphs; 
see \cite{HaTuylsurvey, MV, Villarreal} and their references. 
We are interested in invariants arising from a minimal free resolution. 
The (Castelnuovo--Mumford) regularity and the projective dimension are 
examples of such invariants. 
Many authors investigate these invariants. 
Especially, in 
\cite{FHVT, HaTuyl08, KAM, Kummini, MMCRTY, VanTuyl, Woodroofe10, Zheng}, 
the regularity of edge ideals of some classes of the edge ideals 
were characterized with the notion of the $3$-disjointness of edges; 
two edges $e_1, e_2 \in E(G)$ are said to be \textit{$3$-disjoint} if 
$e_1 \cap e_2 = \emptyset$ and the edge set of the induced subgraph of $G$ 
on $e_1 \cup e_2$ is $\{ e_1, e_2 \}$; 
see Section \ref{sec:pre} for more detail. 

\par
In this paper, we focus on graded Betti numbers. 
Notice that the regularity as well as the projective dimension 
is defined via graded Betti numbers. 
Although some formulas or estimations of ($\mathbb{N}$-)graded Betti numbers 
of edge ideals are discovered in e.g., 
\cite{Bouchat, Chen, CN, DE, EV, FRG09, Goff, HaTuyl07, Jacques, JacqKatz, Katzman, Kimura, RVT, Visscher}, 
it seems that the combinatorial meanings of these are not known so much. 
In \cite{Kimura}, the author explored this direction 
following Katzman \cite{Katzman}. 
In particular, the author gave a non-vanishing theorem 
\cite[Theorem 3.1]{Kimura} of graded Betti numbers of edge ideals 
which was an improvement of Katzman's one \cite[Proposition 2.5]{Katzman}; 
see Section \ref{sec:pre} for more details. 
In this paper, we improve \cite[Theorem 3.1]{Kimura} much more. 
The following theorem is the main result of the paper. 
\begin{theorem}
  \label{MainIntro}
  Let $G$ be a finite simple graph on the vertex set $V$. 
  Suppose that there exists a set of complete bipartite subgraphs 
  $\{ B_1, \ldots, B_r \}$ $(r \geq 1)$ of $G$ 
  satisfying the following $2$ conditions: 
  \begin{enumerate}
  \item $V(B_k) \cap V(B_{\ell}) 
         = \emptyset$ for all $k \neq \ell$. 
  \item There exist edges $e_1, \ldots, e_r$ 
    with $e_k \in E(B_k)$, $k=1, \ldots, r$, 
    which are pairwise $3$-disjoint in $G$. 
  \end{enumerate}
  Set 
  \begin{displaymath}
    \sigma = V(B_1) \cup \cdots \cup V(B_r). 
  \end{displaymath}
  Then identifying $\sigma$ with 
  the $(0,1)$-vector $({\sigma}_v) \in {\mathbb{N}}^{N}$ 
  such that ${\sigma}_v = 1$ if and only if $v \in \sigma$, 
  we have
  \begin{displaymath}
    \beta_{|{\sigma}| - r, \sigma} (S/I(G)) \neq 0. 
  \end{displaymath}
\end{theorem}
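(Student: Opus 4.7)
The plan is to translate the Betti-number assertion into a topological non-vanishing statement about the independence complex of $G[\sigma]$ via Hochster's formula
\[
  \beta_{|\sigma|-r,\sigma}(S/I(G)) = \dim_K \tilde{H}_{r-1}(\mathrm{Ind}(G[\sigma]); K),
\]
and then produce an explicit retract of $\mathrm{Ind}(G[\sigma])$ onto a sphere of the right dimension. Write $e_k = \{a_k, c_k\}$ with $a_k \in A_k$ and $c_k \in C_k$ for the bipartition $V(B_k) = A_k \sqcup C_k$, and let $\Sigma$ be the simplicial complex on $\{a_1, c_1, \ldots, a_r, c_r\}$ whose maximal faces are the $2^r$ transversals $\{v_1, \ldots, v_r\}$ with $v_k \in \{a_k, c_k\}$. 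Pairwise $3$-disjointness of the $e_k$ makes each such transversal independent in $G$, so $\Sigma$ is a subcomplex of $\mathrm{Ind}(G[\sigma])$. Since $\Sigma$ is the $r$-fold join of $S^0$, it is an $(r-1)$-sphere, giving $\tilde{H}_{r-1}(\Sigma; K) \cong K$.

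The crucial step is to define a simplicial retraction $\rho \colon \mathrm{Ind}(G[\sigma]) \to \Sigma$ on vertices by $\rho(v) = a_k$ if $v \in A_k$ and $\rho(v) = c_k$ if $v \in C_k$, which is well-defined by the disjointness hypothesis $(1)$. For any independent set $F \subseteq \sigma$, the complete-bipartite structure of $B_k$ forces $F \cap V(B_k)$ to lie entirely in $A_k$ or entirely in $C_k$, so $\rho(F)$ meets each pair $\{a_k, c_k\}$ in at most one vertex and is hence a face of $\Sigma$. On the vertices of $\Sigma$ the map is the identity, so $\rho$ is a retraction.

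A retraction splits the inclusion-induced map $\tilde{H}_{r-1}(\Sigma) \to \tilde{H}_{r-1}(\mathrm{Ind}(G[\sigma]))$, so the latter group is nonzero and Hochster's formula yields the desired non-vanishing Betti number. The main obstacle is to verify that $\rho$ is simplicial, and this is exactly the point where the complete-bipartite hypothesis is essential: without it, an independent set $F$ could map to a non-face of $\Sigma$ containing both $a_k$ and $c_k$. The $3$-disjointness of the $e_k$'s, by contrast, is used only to place $\Sigma$ inside $\mathrm{Ind}(G[\sigma])$.
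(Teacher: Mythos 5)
Your argument is correct, and it is a genuinely different proof from the one in the paper. You work entirely on the topological side of Hochster's formula: the transversal complex $\Sigma$ is the $r$-fold join of copies of $S^0$ (the boundary of the cross-polytope, an $(r-1)$-sphere), pairwise $3$-disjointness of the $e_k$ places it inside $\Delta(G_\sigma)$, and the complete-bipartite structure of each $B_k$ together with the disjointness hypothesis $(1)$ makes the fold $\rho$ a simplicial retraction, so $\iota_*$ is split injective on $\tilde{H}_{r-1}$ and the Betti number is nonzero (indeed your argument shows it is at least $1$, and even that $c(G_{\sigma})\ge 2$ restricted to each $B_k$ forces the relevant homology class to survive). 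All the steps check out: the indexing $|\sigma|-i-1=r-1$ is right, $\rho$ is well defined by condition $(1)$, independence of $F$ forces $F\cap V(B_k)$ into one side of the bipartition, and $\rho$ fixes $\Sigma$ pointwise. The paper instead works on the resolution side: it builds the Lyubeznik resolution of $I(G)$ for a carefully chosen ordering of the generators, exhibits an explicit homogeneous cycle $\xi$ in $K\otimes L_\bullet$ supported on a maximal $L$-admissible symbol of homological degree $|\sigma|-r$, and shows it cannot be a boundary (Propositions \ref{Lyu-resBetti} and \ref{Lyu-resBettiMany} and Lemma \ref{CompleteBipartite}). Your approach is shorter and conceptually transparent, extending the sphere argument behind Theorem \ref{reg>=a(G)} from induced subcomplexes to retracts; the paper's approach is heavier but yields reusable non-vanishing criteria for Lyubeznik resolutions of arbitrary monomial ideals, which is machinery of independent interest beyond the edge-ideal setting.
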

Actually, in \cite[Theorem 3.1]{Kimura}, we restricted each $B_k$ 
to be a bouquet, that is a complete bipartite subgraph of type $(1, n_k)$. 
However in the theorem, 
we allow $B_k$ to be an arbitrary complete bipartite subgraph. 
Also the condition (2) of Theorem \ref{MainIntro}, 
which is the same one as in \cite[Theorem 3.1]{Kimura}, 
is not so peculiar because of the relation between 
the regularity and the $3$-disjointness of edges; see Section \ref{sec:pre} 
for more details. 

\par
\bigskip

\par
Now we explain the organization of the paper. 
In the next section, we recall some notions and 
observe some known results 
about relations between the regularity of edge ideals 
and the $3$-disjointness of the original graphs. 
Then the meaning of the condition (2) of Theorem \ref{MainIntro} will be clear. 
Also we recall preceding results: \cite[Proposition 2.5]{Katzman} and 
\cite[Theorem 3.1]{Kimura}. 
In Section \ref{sec:Lyu-res}, 
we recall the definition of a Lyubeznik resolution (\cite{Ly}) 
and show some properties of it, 
which plays a key role in the proof of Theorem \ref{MainIntro}. 
Then in Section \ref{sec:proof}, we prove Theorem \ref{MainIntro}. 

\par
Theorem \ref{MainIntro} gives a sufficient condition for non-vanishing 
to the graded Betti numbers of edge ideals. 
One would be interested in 
how near the condition is to a necessary one. 
In \cite{Kimura}, the author proved that the condition of 
\cite[Theorem 3.1]{Kimura} (and thus that of Theorem \ref{MainIntro}) 
is a necessary condition for edge ideals of chordal graphs 
(\cite[Theorem 4.1]{Kimura}, actually, the author proved this only for 
the $\mathbb{N}$-graded case, but this is still true 
for the ${\mathbb{N}}^N$-graded case). 
In Sections \ref{sec:LinearStrand} and \ref{sec:CMbipartite}, 
we give other partial answers to the above question; 
Propositions  \ref{LinearStrand} and \ref{extrem-CMbipartite}. 
In the argument on Sections \ref{sec:LinearStrand} and \ref{sec:CMbipartite}, 
complete bipartite subgraphs naturally arise. 
Moreover we give combinatorial descriptions for 
the projective dimension of the edge ideal of 
a co-chordal graph (Corollary \ref{co-chordalProj}) 
and a Cohen--Macaulay bipartite graph (Corollary \ref{CMbipartiteProj}). 
Finally in Section \ref{sec:UnmixedBipartite}, 
we treat an unmixed bipartite graph and 
give a combinatorial description for the projective dimension of the edge 
ideal of it (Theorem \ref{unmixedProj}) 
as a generalization of the result for a Cohen--Macaulay bipartite graph 
by using Kummini's consideration \cite{Kummini}. 

\section{Preliminaries and known results}
\label{sec:pre}
In this section, we recall some notions 
and some known results about the regularity of edge ideals. 

\par
Let $S$ be a polynomial ring in $N$ variables over a field $K$ 
with the ${\mathbb{N}}^N$-grading 
and $M$ a ${\mathbb{N}}^N$-graded $S$-module. 
Let 
\begin{displaymath}
  0 \longrightarrow \bigoplus_{\sigma \in {\mathbb{N}}^N} 
                      S(-\sigma)^{\beta_{p, \sigma}}
    \longrightarrow \cdots 
    \longrightarrow \bigoplus_{\sigma \in {\mathbb{N}}^N} 
                      S(-\sigma)^{\beta_{1, \sigma}} 
    \longrightarrow \bigoplus_{\sigma \in {\mathbb{N}}^N} 
                      S(-\sigma)^{\beta_{0, \sigma}} 
    \longrightarrow M \longrightarrow 0
\end{displaymath}
be a minimal ${\mathbb{N}}^N$-graded free resolution of $M$. 
The \textit{projective dimension} of $M$, 
denoted by $\pd (M)$ is the length $p$ of the resolution 
and ${\beta}_{i, \sigma} (M) := {\beta}_{i, \sigma}$ is 
the $i$th ${\mathbb{N}}^N$-graded Betti number of $M$ of degree $\sigma$. 
In terms of Betti numbers, 
the projective dimension of $M$ is described as follows: 
\begin{displaymath}
  \pd (M) = \max \{ i \; : \; \beta_{i, \sigma} (M) \neq 0 \}. 
\end{displaymath}
Also the \textit{(Castelnuovo--Mumford) regularity} of $M$ is defined by 
\begin{displaymath}
  \reg (M) := \max \{ |{\sigma}| -i \; : \; \beta_{i,\sigma} (M) \neq 0 \}, 
\end{displaymath}
where $|{\sigma}| = \sigma_{1} + \cdots + \sigma_{N}$ for 
$\sigma = ({\sigma}_{\ell}) \in {\mathbb{N}}^N$. 
We can also consider the $\mathbb{N}$-grading on $S$ as well as on $M$. 
Then the $i$th $\mathbb{N}$-graded Betti number $\beta_{i,j} (M)$ of $M$ of 
degree $j \in \mathbb{N}$ is defined similarly to $\beta_{i, \sigma} (M)$. 
Note that 
$\beta_{i,j} (M) 
 = \sum_{\sigma \in {\mathbb{N}}^N, \  |{\sigma}| = j} \beta_{i, \sigma} (M)$. 

\par
Let $d$ be the minimum degree among non-zero elements in $M$ 
in $\mathbb{N}$-grading. 
Then $\beta_{i, \sigma} (M)$ with $|{\sigma}| = i+d$ is called a graded 
Betti number in the \textit{linear strand} of $M$. 
When each graded Betti number of $M$ which is not in the linear strand 
vanishes, we say that $M$ has a \textit{linear resolution}. 

\par
Let $G$ be a finite simple graph on the vertex set $V$. 
Set $N = \# V$. 
Since the edge ideal $I(G)$ is a squarefree monomial ideal, 
the Hochster's formula
for Betti numbers (\cite[Theorem 5.5.1]{BrunsHerzog}) shows that 
$\beta_{i, \sigma} (S/I(G)) = 0$ unless $\sigma \in {\mathbb{N}}^N$ 
is a $(0,1)$-vector. 
In what follows, we identify the subset $\sigma$ of $V$ with 
the $(0,1)$-vector $({\sigma}_v) \in {\mathbb{N}}^{N}$ 
defined by ${\sigma}_v = 1$ iff $v \in \sigma$. 

\par
A graph $G$ on the vertex set $V$ is called \textit{bipartite} 
if $V$ can be bipartitioned as 
$V = V_1 \sqcup V_2$ with $\{ v, v' \} \notin E(G)$ for all $v, v' \in V_i$ 
($i=1, 2$). 
Let $G$ be a bipartite graph with a bipartition $V = V_1 \sqcup V_2$. 
If $\{v_1, v_2 \} \in E(G)$ for all $v_1 \in V_1$ and for all $v_2 \in V_2$, 
then the graph $G$ is called a \textit{complete bipartite graph 
of type $(\# V_1, \# V_2)$}. 
In particular, a complete bipartite graph of type $(1, n)$ 
is called a \textit{bouquet}. 
Let $c \geq 2$ be an integer. 
A graph $G$ on the vertex set $V$ is 
a \textit{complete $c$-partite graph} if $V$ can be partitioned as 
$V = V_1 \sqcup V_2 \sqcup \cdots \sqcup V_c$ 
with the property: for $u \in V_{k_1}$ and $v \in V_{k_2}$, 
$\{ u, v \} \in E(G)$ 
if and only if $k_1 \neq k_2$. 

\par
A graph is called a \textit{forest} if it has no cycle. 
A \textit{chordal graph} is a graph whose cycle of length $>3$ has a chord. 
We say that a graph $G$ is Cohen--Macaulay (over $K$) if 
$S/I(G)$ is Cohen--Macaulay. 
Also we say that a graph $G$ is unmixed if 
$I(G)$ is height unmixed. 
It is well known that if $G$ is Cohen--Macaulay, then 
$G$ is unmixed. 

\par
Let $G$ be a finite simple graph on the vertex set $V$. 
For a subset $\sigma \subset V$, we denote by $G_{\sigma}$, 
the \textit{induced subgraph} of $G$ on $\sigma$. 
That is, $G_{\sigma}$ is the subgraph of $G$ whose vertex set is $\sigma$ and 
whose edge set consists of all edges in $E(G)$ contained in $\sigma$. 
A subgraph $G'$ of $G$ is called a \textit{spanning subgraph} of $G$ if 
$V(G') = V(G)$. 
Let $e_1, e_2$ be two edges of $G$. 
Then we say that $e_1$ and $e_2$ are \textit{$3$-disjoint} if 
$e_1 \cap e_2 = \emptyset$ and $E(G_{e_1 \cup e_2}) = \{ e_1, e_2 \}$. 
A subset $\{ e_1, \ldots, e_r \} \subset E(G)$ is called 
\textit{pairwise $3$-disjoint} if $e_k$ and $e_{\ell}$ are $3$-disjoint 
for all $k \neq \ell$. 
We denote by $a(G)$, the maximum cardinality of pairwise $3$-disjoint subsets 
of edges of $G$. 
The invariant $a(G)$ is closely related to the regularity of $G$. 
Actually, Katzman \cite{Katzman} proved the following theorem. 
\begin{theorem}[{\cite[Lemma 2.2]{Katzman}}]
  \label{reg>=a(G)}
  For a finite simple graph $G$, we have 
  \begin{displaymath}
    \reg (S/I(G)) \geq a(G). 
  \end{displaymath}
\end{theorem}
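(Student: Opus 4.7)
The plan is to exhibit, for each maximal pairwise $3$-disjoint set of edges, a single nonvanishing $\mathbb{N}^N$-graded Betti number of $S/I(G)$ whose contribution to the regularity equals $a(G)$. Let $r = a(G)$ and fix pairwise $3$-disjoint edges $e_1, \ldots, e_r$ of $G$. Since the $e_k$ are pairwise vertex-disjoint, the set $\sigma := e_1 \cup \cdots \cup e_r \subset V$ has $|\sigma| = 2r$; and by the $3$-disjointness hypothesis, the only edges of $G$ contained in $\sigma$ are $e_1, \ldots, e_r$ themselves, so the induced subgraph $G_\sigma$ is the vertex-disjoint union of these $r$ edges.

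Because $I(G)$ is a squarefree monomial ideal, Hochster's formula (\cite[Theorem 5.5.1]{BrunsHerzog}) gives
\[
  \beta_{i,\sigma}\bigl(S/I(G)\bigr) \;=\; \dim_K \widetilde{H}_{|\sigma|-i-1}\!\bigl(\operatorname{Ind}(G_\sigma);K\bigr),
\]
where $\operatorname{Ind}(G_\sigma)$ is the independence complex of $G_\sigma$ (equivalently, the Stanley--Reisner complex of $I(G_\sigma)$). Specializing to $i = r$, so that $|\sigma| - i = r$, the task reduces to producing a nontrivial class in $\widetilde{H}_{r-1}(\operatorname{Ind}(G_\sigma);K)$.

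For this I use that the independence complex is compatible with disjoint unions: for graphs $G_1, G_2$ on disjoint vertex sets one has $\operatorname{Ind}(G_1 \sqcup G_2) = \operatorname{Ind}(G_1) * \operatorname{Ind}(G_2)$, the simplicial join. The independence complex of a single edge $e_k = \{u_k, v_k\}$ is the two-point complex $\{u_k\} \sqcup \{v_k\} \cong S^0$. Iterating, $\operatorname{Ind}(G_\sigma)$ is the $r$-fold join $(S^0)^{*r}$, the boundary of the $r$-dimensional cross-polytope, which is homeomorphic to the sphere $S^{r-1}$. Hence $\widetilde{H}_{r-1}(\operatorname{Ind}(G_\sigma);K) \cong K \neq 0$, so $\beta_{r,\sigma}(S/I(G)) \neq 0$; since $|\sigma| - r = r$, the definition of regularity yields $\reg(S/I(G)) \geq r = a(G)$.

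The one substantive point is that the $3$-disjointness hypothesis is exactly what guarantees that $G_\sigma$ contains no edges beyond $e_1, \ldots, e_r$; without that cleanliness, extra edges inside $\sigma$ would enlarge $I(G_\sigma)$, shrink $\operatorname{Ind}(G_\sigma)$, and potentially kill the top homology of the sphere. Once this combinatorial observation is in hand, the remaining ingredients (Hochster's formula and $(S^0)^{*r} \simeq S^{r-1}$) are standard, so the real content of the proof lies in the choice of $\sigma$ and this induced-subgraph identification rather than in any delicate homological computation.
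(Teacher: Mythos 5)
Your proof is correct: $3$-disjointness forces $E(G_\sigma)=\{e_1,\dots,e_r\}$, so $\Delta(G_\sigma)$ is the $r$-fold join of copies of $S^0$, i.e.\ a sphere $S^{r-1}$, and Hochster's formula then gives $\beta_{r,\sigma}(S/I(G))\neq 0$ with $|\sigma|-r=r=a(G)$. The paper itself offers no proof of this statement (it is quoted from Katzman), but your argument is essentially the standard one behind Katzman's Lemma~2.2 --- restriction to the induced subgraph on the $3$-disjoint edges, which is a disjoint union of $r$ edges (equivalently, a complete intersection whose Koszul resolution exhibits the same top Betti number) --- so there is nothing to object to beyond the trivial remark that the case $r=0$ holds vacuously.
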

Zheng \cite{Zheng} proved that the equality 
$\reg (S/I(G)) = a(G)$ holds when $G$ is a forest. 
Later, many authors 
\cite{FHVT, HaTuyl08, KAM, Kummini, MMCRTY, VanTuyl, Woodroofe10} 
discovered classes of graphs with the equality. 
Among these results, we note here the following graphs. 
\begin{theorem}[{H\`{a} and Van Tuyl \cite[Theorem 6.8]{HaTuyl08}, 
  Kummini \cite[Theorem 1.1]{Kummini}}]
  \label{reg=a(G)}
  For the following classes of graphs $G$, we have $\reg (S/I(G)) = a(G)$: 
  \begin{enumerate}
  \item chordal graphs. 
  \item unmixed bipartite graphs, especially, Cohen--Macaulay bipartite graphs. 
  \end{enumerate}
\end{theorem}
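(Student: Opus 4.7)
The plan is to establish the reverse inequality $\reg(S/I(G)) \leq a(G)$ in both cases, since Theorem \ref{reg>=a(G)} already supplies $\reg(S/I(G)) \geq a(G)$. In both cases I would induct on the number of vertices via a short exact sequence obtained by splitting off a carefully chosen vertex $v \in V(G)$:
\begin{equation*}
0 \longrightarrow \bigl(S/(I(G) : v)\bigr)(-1) \stackrel{\cdot v}{\longrightarrow} S/I(G) \longrightarrow S/(I(G) + (v)) \longrightarrow 0.
\end{equation*}
The standard regularity lemma for short exact sequences gives $\reg(S/I(G)) \leq \max\{\reg(S/(I(G):v)) + 1,\ \reg(S/(I(G) + (v)))\}$. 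Modulo $v$, the quotient $S/(I(G)+(v))$ is the coordinate ring of the edge ideal of the induced subgraph $G_{V\setminus\{v\}}$, while $(I(G):v) = \bigl(w : \{v,w\} \in E(G)\bigr) + I(G_{V\setminus\{v\}})$; modulo its linear part this is the edge ideal of $G_{V\setminus N_G[v]}$, where $N_G[v]$ denotes the closed neighborhood of $v$. The plan is to show that both quotients come from graphs in the same class, apply the inductive hypothesis, and relate the $a$-invariants.

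For the chordal case (1), I would take $v$ to be a \emph{simplicial} vertex, which exists by the classical theorem of Dirac on chordal graphs. Simpliciality makes $N_G(v)$ a clique; since induced subgraphs of chordal graphs are chordal, both $G_{V\setminus\{v\}}$ and $G_{V\setminus N_G[v]}$ are chordal. By induction their edge-ideal regularities agree with $a(G_{V\setminus\{v\}})$ and $a(G_{V\setminus N_G[v]})$ respectively. The combinatorial inequalities needed are $a(G_{V\setminus\{v\}}) \leq a(G)$, which is trivial, and $a(G_{V\setminus N_G[v]}) + 1 \leq a(G)$, which follows by taking any pairwise $3$-disjoint family realizing $a(G_{V\setminus N_G[v]})$ and adjoining the edge $\{v,w\}$ for any $w \in N_G(v)$: this edge is $3$-disjoint from every edge of $G_{V\setminus N_G[v]}$ since $V(G_{V\setminus N_G[v]}) \cap N_G[v] = \emptyset$. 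Plugging back into the short exact sequence bound yields $\reg(S/I(G)) \leq a(G)$.

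For the unmixed bipartite case (2), the same splitting technique is applied, but the delicate point is preserving the graph class. Kummini's structural analysis (building on Villarreal's characterization via a perfect matching) produces a labelling $V(G) = \{x_1,\ldots,x_n\} \sqcup \{y_1,\ldots,y_n\}$ with $\{x_i,y_i\} \in E(G)$ and with a transitivity condition on cross-edges. Using this structure one identifies an extremal vertex $v$ — essentially an $x_i$ (or $y_i$) minimal with respect to the associated poset — such that \emph{both} $G_{V\setminus\{v\}}$ and $G_{V\setminus N_G[v]}$ are again unmixed bipartite. Then the inductive scheme runs exactly as in (1), with the same combinatorial extension argument for $a(G_{V\setminus N_G[v]}) + 1 \leq a(G)$ using an edge incident to $v$.

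The main obstacle in both parts is the combination of inductive class-preservation with the $a$-invariant comparison. For chordal graphs this is essentially automatic because chordality is hereditary under induced subgraphs, so the proof is short. For unmixed bipartite graphs, however, unmixedness is \emph{not} preserved under arbitrary vertex removal, so the technical heart of the argument is Kummini's identification of a splitting vertex whose removal, together with removal of its closed neighborhood, preserves the unmixed bipartite class. This structural analysis is the reason part (2) is substantially more delicate than part (1).
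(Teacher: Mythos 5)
First, note that the paper gives no proof of Theorem \ref{reg=a(G)}: it is quoted from H\`{a}--Van Tuyl and Kummini, so your attempt can only be measured against those sources. Your strategy for (1) --- induction via the exact sequence $0 \to (S/(I(G):v))(-1) \to S/I(G) \to S/(I(G)+(v)) \to 0$ and the bound $\reg(S/I(G)) \le \max\{\reg(S/(I(G):v))+1,\ \reg(S/(I(G)+(v)))\}$ --- is reasonable and close in spirit to known arguments, but the key combinatorial step is wrong. You claim that for a simplicial vertex $v$ and any $w \in N_G(v)$ the edge $\{v,w\}$ is $3$-disjoint from every edge of $G_{V \setminus N_G[v]}$ ``since $V(G_{V\setminus N_G[v]}) \cap N_G[v] = \emptyset$.'' That only gives vertex-disjointness; $3$-disjointness also forbids edges of $G$ joining $w$ to the other edge, and $w$ may well have neighbours outside $N_G[v]$. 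Concretely, let $G$ be the path $P_4$ on vertices $a,b,w,v$ (in this order). Then $v$ is simplicial, $G_{V\setminus N_G[v]}$ is the single edge $\{a,b\}$, and your inequality would give $a(P_4) \ge a(G_{V\setminus N_G[v]})+1 = 2$, whereas $a(P_4)=1$ because the induced subgraph on $\{a,b,w,v\}$ contains the extra edge $\{b,w\}$. So the inequality $a(G_{V\setminus N_G[v]})+1 \le a(G)$ is false for a simplicial $v$ and the induction does not close. The standard repair is to split at a \emph{neighbour} $y$ of a simplicial vertex $x$ and adjoin the edge $\{x,y\}$ to a maximal pairwise $3$-disjoint family of $G_{V\setminus N_G[y]}$: since $N_G(x)$ is a clique containing $y$, any neighbour of $x$ lies in $N_G[y]$, so the required $3$-disjointness genuinely holds.

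For (2) you have not actually given an argument: everything is delegated to the assertion that ``one identifies an extremal vertex $v$ such that both $G_{V\setminus\{v\}}$ and $G_{V\setminus N[v]}$ are again unmixed bipartite,'' which is precisely the hard point (deleting one vertex of the perfect matching $\{x_i,y_i\}$ leaves its partner isolated and in general destroys unmixedness, and you would again need an edge at $v$ whose second endpoint has all of its neighbours inside $N_G[v]$). Kummini's actual route is entirely different: he passes to the Alexander dual, builds the acyclic reduction $\widehat{G}$ (a Cohen--Macaulay bipartite graph, cf.\ Section \ref{sec:UnmixedBipartite} of this paper), and transfers the regularity computation there via the Herzog--Hibi resolution; no vertex-splitting induction within the class of unmixed bipartite graphs is performed. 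As written, part (2) is a plan rather than a proof, and part (1) contains a genuine error.
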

These theorems implies the naturalness of the condition (2) 
of Theorem \ref{MainIntro}. 

\par
We note that in general, the equality $\reg (S/I(G)) = a(G)$ does not hold. 
For example, 
Kummini \cite{Kummini} noted that for the octahedron $C_8$, 
which is a bipartite graph, 
the proper inequality $\reg (S/I(C_8)) > a(C_8)$ holds. 
A class of graphs $G$ those satisfy $\reg (S/I(G)) > a(G)$ is also found in 
Nevo \cite{Nevo}. 

\par
\bigskip

\par
We close this section by observing preceding results. 
First we recall Katzman's result \cite[Propsition]{Katzman}. 
It can be rewrite as the following form. 
\begin{theorem}[{Katzman \cite{Katzman}}]
  \label{KatzmanBetti}
  Let $G$ be a finite simple graph on the vertex set $V$ and 
  $\sigma$ a subset of $V$. 
  If $G_{\sigma}$ is the disjoint union of $r$ bouquets, 
  then 
  \begin{displaymath}
    \beta_{|{\sigma}| - r, |{\sigma}|} (S/I(G)) \neq 0. 
  \end{displaymath}
\end{theorem}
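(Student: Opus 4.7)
The plan is to prove this statement by Hochster's formula applied to the independence complex. Recall that for any finite simple graph $G$, the edge ideal $I(G)$ is the Stanley--Reisner ideal of the independence complex $\mathrm{Ind}(G) = \{\tau \subseteq V : \tau \text{ is an independent set of } G\}$, and Hochster's formula specializes to
$$\beta_{i, \sigma}(S/I(G)) = \dim_K \widetilde{H}_{|\sigma|-i-1}\bigl(\mathrm{Ind}(G_{\sigma}); K\bigr).$$
Setting $i = |\sigma| - r$, it suffices to exhibit a nonzero class in $\widetilde{H}_{r-1}(\mathrm{Ind}(G_\sigma); K)$, since the $\mathbb{N}$-graded Betti number $\beta_{|\sigma|-r, |\sigma|}(S/I(G))$ is a sum of non-negative contributions $\beta_{|\sigma|-r, \tau}(S/I(G))$ over subsets $\tau$ with $|\tau| = |\sigma|$, and will therefore be nonzero as soon as one such contribution is nonzero.

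Next, I would exploit the hypothesis that $G_{\sigma} = B_1 \sqcup \cdots \sqcup B_r$ is a disjoint union of bouquets. For any disjoint union of graphs, independent sets decompose coordinate-wise across the components, so the independence complex factors as a simplicial join:
$$\mathrm{Ind}(G_{\sigma}) = \mathrm{Ind}(B_1) * \mathrm{Ind}(B_2) * \cdots * \mathrm{Ind}(B_r).$$
For a single bouquet $B_k$ with center $c_k$ and leaves $L_k$ (where $|L_k| \geq 1$), the independent sets are exactly $\{c_k\}$ together with all subsets of $L_k$. Thus $\mathrm{Ind}(B_k)$ is the disjoint union of the full simplex on $L_k$ and the isolated vertex $c_k$; as a topological space this is homotopy equivalent to $S^0$, and in particular $\widetilde{H}_0(\mathrm{Ind}(B_k); K) \cong K$ with all higher reduced homology vanishing.

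Finally, I would combine these by iterating the Künneth formula for simplicial joins, which over the field $K$ takes the clean form
$$\widetilde{H}_n(X * Y; K) \cong \bigoplus_{i+j = n-1} \widetilde{H}_i(X; K) \otimes_K \widetilde{H}_j(Y; K).$$
Applying this $r-1$ times to the join above, every tensor factor must come from $\widetilde{H}_0$ of some $\mathrm{Ind}(B_k)$, yielding $\widetilde{H}_{r-1}(\mathrm{Ind}(G_{\sigma}); K) \cong K^{\otimes r} \cong K$, i.e.\ the join of $r$ copies of $S^0$ is the sphere $S^{r-1}$. Combined with Hochster's formula, this gives $\beta_{|\sigma|-r, \sigma}(S/I(G)) \neq 0$, and hence the $\mathbb{N}$-graded version as well. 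There is no serious obstacle here; the only point demanding a brief check is the homotopy type of $\mathrm{Ind}(B_k)$, where the assumption that a bouquet has at least one leaf (so that the leaf simplex is nonempty and the complex genuinely has two components) is used to ensure the $S^0$ rather than contractible.
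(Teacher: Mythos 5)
Your argument is correct, and it is a genuinely different route from the one the paper points to: this theorem is recalled from Katzman, who proved it by exhibiting an appropriate cycle in the Taylor resolution, and the paper's own machinery for this circle of results (Propositions \ref{Lyu-resBetti} and \ref{Lyu-resBettiMany}, Lemma \ref{CompleteBipartite}) works with Lyubeznik resolutions. You instead use Hochster's formula together with the fact that the independence complex of a disjoint union is the join of the independence complexes, reducing everything to the observation that $\mathrm{Ind}(B_k)\simeq S^0$ for a bouquet and that an $r$-fold join of $0$-spheres is $S^{r-1}$. The indexing checks out ($i=|\sigma|-r$ gives $\tilde H_{r-1}$, and the join formula over a field yields exactly $K$ in that degree), and your use of the hypothesis that each bouquet has at least one leaf is the right place to be careful. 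What your approach buys is brevity and a sharper conclusion: you actually compute $\beta_{|\sigma|-r,\sigma}(S/I(G))=1$ in the multidegree $\sigma$, which implies the $\mathbb{N}$-graded statement. What it does not buy is the generalization that motivates the paper: in Theorems \ref{KimuraBetti} and \ref{MainIntro} the subgraphs $B_k$ are not assumed to be induced and $G_\sigma$ is not a disjoint union, so $\Delta(G_\sigma)$ is no longer a join of spheres and the homology is not accessible by this decomposition; that is precisely why the resolution-theoretic (Taylor, then Lyubeznik) arguments are used there. For the statement as given, your proof is complete.
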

On Theorem \ref{KatzmanBetti}, any two edges of $G_{\sigma}$ 
which belong to different bouquets are $3$-disjoint. 
The author \cite{Kimura} improved this as the following form. 
\begin{theorem}[{\cite[Theorem 3.1]{Kimura}}]
  \label{KimuraBetti}
  Let $G$ be a finite simple graph on the vertex set $V$. 
  Suppose that there exists a set of bouquets 
  $\{ B_1, \ldots, B_r \}$ $(r \geq 1)$ of $G$ 
  satisfying the following $2$ conditions: 
  \begin{enumerate}
  \item $V(B_k) \cap V(B_{\ell}) 
         = \emptyset$ for all $k \neq \ell$. 
  \item There exist edges $e_1, \ldots, e_r$ 
    with $e_k \in E(B_k)$, $k=1, \ldots, r$, 
    which are pairwise $3$-disjoint in $G$. 
  \end{enumerate}
  Set $\sigma = V(B_1) \cup \cdots \cup V(B_r)$. 
  Then we have
  \begin{displaymath}
    \beta_{|{\sigma}| - r, \sigma} (S/I(G)) \neq 0. 
  \end{displaymath}
\end{theorem}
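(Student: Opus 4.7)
The plan is to translate the Betti statement into a topological claim via Hochster's formula, and then to exhibit a non-bounding cycle by comparing $\mathrm{Ind}(G_{\sigma})$ with the independence complex of a sparser graph on the same vertex set. By Hochster's formula for squarefree monomial ideals,
\[
\beta_{|\sigma|-r,\sigma}(S/I(G)) \;=\; \dim_{K}\tilde{H}_{r-1}(\mathrm{Ind}(G_{\sigma});K),
\]
where $\mathrm{Ind}(H)$ denotes the independence complex of a graph $H$, so it suffices to produce a non-zero class in $\tilde{H}_{r-1}(\mathrm{Ind}(G_{\sigma});K)$.

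Next I would introduce the spanning subgraph $G_0$ on $\sigma$ whose edges are exactly $E(B_1)\cup\cdots\cup E(B_r)$. By hypothesis (1), $G_0 = B_1 \sqcup \cdots \sqcup B_r$. Since $E(G_0) \subseteq E(G_{\sigma})$, every independent set of $G_{\sigma}$ is also independent in $G_0$, so $\mathrm{Ind}(G_{\sigma})$ is a subcomplex of $\mathrm{Ind}(G_0)$ on the same vertex set. The disjoint union structure gives $\mathrm{Ind}(G_0) = \mathrm{Ind}(B_1) * \cdots * \mathrm{Ind}(B_r)$. Labeling each $e_k=\{c_k,f_{k,1}\}$ where $c_k$ is the center of $B_k$ and $f_{k,1},\ldots,f_{k,n_k}$ are its leaves, the complex $\mathrm{Ind}(B_k)$ is the disjoint union of the isolated vertex $c_k$ and the full simplex on the leaves, hence homotopy equivalent to $S^{0}$, with $\tilde{H}_0$ generated by the $0$-cycle $z_k := c_k - f_{k,1}$. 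Consequently $\mathrm{Ind}(G_0)\simeq S^{r-1}$, and a generator of $\tilde{H}_{r-1}(\mathrm{Ind}(G_0);K)$ is the join product
\[
\zeta_0 \;:=\; z_1 \wedge \cdots \wedge z_r \;=\; \sum_{\varepsilon\in\{0,1\}^r}(-1)^{|\varepsilon|}\bigl[v_1^{\varepsilon_1},\ldots,v_r^{\varepsilon_r}\bigr],
\]
with $v_k^{0}=c_k$ and $v_k^{1}=f_{k,1}$.

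The critical step, where condition (2) enters, is to verify that $\zeta_0$ already lies inside the subcomplex $\mathrm{Ind}(G_{\sigma})$. Every simplex appearing in $\zeta_0$ has the form $\{v_1,\ldots,v_r\}$ with $v_k\in e_k$. The pairwise $3$-disjointness of $e_1,\ldots,e_r$ in $G$ says that for $k\neq \ell$ the induced subgraph $G_{e_k\cup e_\ell}$ has only the edges $e_k$ and $e_\ell$; since $v_k$ and $v_\ell$ lie in different edges, the pair $\{v_k,v_\ell\}$ is neither $e_k$ nor $e_\ell$, and hence is not an edge of $G$. Therefore $\{v_1,\ldots,v_r\}$ is independent in $G_{\sigma}$, i.e., a face of $\mathrm{Ind}(G_{\sigma})$.

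Thus $\zeta_0$ is an $(r-1)$-cycle in $\mathrm{Ind}(G_{\sigma})$; if it bounded there, the same relation would hold in the ambient complex $\mathrm{Ind}(G_0)$, contradicting the fact that $[\zeta_0]$ generates $\tilde{H}_{r-1}(\mathrm{Ind}(G_0);K)=K$. Hence $[\zeta_0]$ is a nonzero class in $\tilde{H}_{r-1}(\mathrm{Ind}(G_{\sigma});K)$, and Hochster's formula yields the desired non-vanishing. The main obstacle is precisely the verification in the previous paragraph: one must ensure that all $2^r$ facets composing $\zeta_0$ survive the restriction from $\mathrm{Ind}(G_0)$ to $\mathrm{Ind}(G_{\sigma})$, and this is exactly what the $3$-disjointness hypothesis is designed to guarantee; the rest is the standard join-sphere computation together with the ``comparison with a sparser graph'' trick.
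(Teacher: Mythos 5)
Your proof is correct, and it takes a genuinely different route from the paper. The paper (both for this statement, via Barile's observation in the cited earlier work, and for its generalization Theorem 1.1) works entirely inside a Lyubeznik resolution: it orders the generators of $I(G)$ so that a particular $L$-admissible symbol of homological degree $|\sigma|-r$ is maximal, and then exhibits an explicit cycle in $K\otimes L_{\bullet}$ (Propositions 3.1, 3.2 and Lemma 4.1). You instead pass through Hochster's formula and produce the non-bounding $(r-1)$-cycle directly in the independence complex: the cross-polytope cycle $z_1\ast\cdots\ast z_r$ supported on $e_1\cup\cdots\cup e_r$, which survives in $\Delta(G_\sigma)$ precisely because of $3$-disjointness, and which cannot bound there because it already generates $\tilde{H}_{r-1}$ of the larger complex $\Delta(B_1\sqcup\cdots\sqcup B_r)\simeq S^{r-1}$. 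Every step checks out: the comparison-with-a-sparser-graph trick is valid (a bounding chain in the subcomplex would bound in the supercomplex, i.e.\ the inclusion-induced map sends $[\zeta_0]$ to a nonzero class), and the identification of $\Delta(B_k)$ with two contractible pieces is right. What your approach buys is brevity and the fact that it proves the paper's stronger Theorem 1.1 with no extra effort, since the independence complex of \emph{any} complete bipartite graph is again a disjoint union of two simplices, hence homotopy equivalent to $S^0$; what the paper's approach buys is the explicit Lyubeznik-resolution machinery (maximal $L$-admissible symbols and explicit syzygy representatives), which carries more information than the bare non-vanishing and is reused elsewhere in the paper. Your argument is essentially the one underlying Katzman's bound $\reg(S/I(G))\geq a(G)$, upgraded to a multigraded statement.
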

Theorem \ref{MainIntro} is a further improvement of the above results; 
it says that the claim is still true if we replace bouquets with 
arbitrary complete bipartite subgraphs on Theorem \ref{KimuraBetti}.

\section{Lyubeznik resolutions}
\label{sec:Lyu-res}
Let $S$ be a polynomial ring over a field $K$. 
For a monomial ideal $I \subset S$, 
the explicit free resolution of $S/I$, 
so-called Taylor resolution, is known. 
A Lyubeznik resolution is a subcomplex of Taylor resolution. 
Although both resolutions are not minimal in general, 
a Lyubeznik resolution is quite nearer to minimal than Taylor resolution. 
In this section, we prove sufficient conditions for non-vanishing 
to graded Betti numbers of monomial ideals, 
which are derived from Lyubeznik resolutions and 
indispensable for our proof of Theorem \ref{MainIntro}. 

\par
We first recall the construction of Taylor resolution 
and Lyubeznik resolutions. 
Taylor resolution $(T_{\bullet}, d_{\bullet})$ of $I$ is defined as follows. 
Let $\mathcal{G} (I) = \{ m_1, \ldots, m_{\mu} \}$ be the minimal set of 
monomial generators of $I$. 
Let $s \geq 1$ be an integer. 
For $1 \leq {\ell}_1 < \cdots < {\ell}_{s} \leq \mu$, 
let us consider the symbol $e_{{\ell}_1 \cdots {\ell}_s}$ whose degree is 
$\deg \lcm (m_{{\ell}_1}, \ldots, m_{{\ell}_s})$. 
Let $T_{s}$ be the free $S$-module generated by all 
$e_{{\ell}_1 \cdots {\ell}_s}$, 
$1 \leq {\ell}_1 < \cdots < {\ell}_s \leq \mu$. 
The differential map $d_{s}$ is given by 
\begin{displaymath}
  d_s (e_{{\ell}_1 \cdots {\ell}_s}) 
  = \sum_{t=1}^{s} (-1)^{t - 1} 
    \frac{\lcm (m_{{\ell}_1}, \ldots, m_{{\ell}_s})}{\lcm (m_{{\ell}_1}, \ldots, \widehat{m_{{\ell}_t}}, \ldots, m_{{\ell}_s})} 
    e_{{\ell}_1 \cdots \widehat{{\ell}_t} \cdots {\ell}_s}. 
\end{displaymath}
It is in fact a free resolution of $S/I$ 
(see e.g., \cite[Exercise 17.11]{Eisenbud}). 

\par
A Lyubeznik resolution (\cite{Ly}) is a subcomplex of Taylor resolution. 
To construct a Lyubeznik resolution of $I$, we first fix an order of the 
minimal monomial generators of $I$: $m_1, \ldots, m_{\mu}$. 
The symbol $e_{{\ell}_1 \cdots {\ell}_s}$ is said to be $L$-admissible if 
for all $1 \leq t < s$ and for all $q < {\ell}_t$, 
$\lcm (m_{{\ell}_t}, \ldots, m_{{\ell}_s})$ is not divisible by $m_q$. 
Then the Lyubeznik resolution of $I$ with respect to the above order 
of the minimal monomial generators is 
a subcomplex of Taylor resolution generated by all $L$-admissible symbols; 
it is in fact a free resolution of $S/I$ (Lyubeznik \cite{Ly}). 
Since $L$-admissibleness depends on an order of monomial generators, 
a Lyubeznik resolution also depends on it. 

\par
Although a Lyubeznik resolution is not minimal in general, 
we can obtain some non-vanishment of graded Betti numbers 
of $S/I$ from it. 
For example, Barile \cite[Remark 1]{Barile05} note that 
$\beta_{s, \sigma} (S/I) \neq 0$ if there exists 
a maximal $L$-admissible symbol $e_{{\ell}_1 \cdots {\ell}_s}$ 
with $\deg e_{{\ell}_1 \cdots {\ell}_s} = \sigma$ satisfying 
\begin{displaymath}
  \deg e_{{\ell}_1 \cdots \widehat{{\ell}_t} \cdots {\ell}_s} 
    \neq \deg e_{{\ell}_1 \cdots {\ell}_s}, 
  \qquad t = 1, 2, \ldots, s.  
\end{displaymath}
Here we say that an $L$-admissible symbol $e_{{\ell}_1 \cdots {\ell}_s}$ 
is maximal if $e_{k_1 \cdots k_{s'}}$ is not $L$-admissible whenever 
$\{ {\ell}_1, \ldots, {\ell}_s \} \subsetneq \{ k_1, \ldots, k_{s'} \}$. 

\par
We obtain another non-vanishment of graded Betti numbers of $S/I$ 
by observing a Lyubeznik resolution more detailed. 
\begin{proposition}
  \label{Lyu-resBetti}
  Let $S = K[x_1, \ldots, x_N]$ be a polynomial ring over a field $K$ 
  and $I$ a monomial ideal of $S$. 
  Set $\mathcal{G} (I) = \{ m_1, \ldots, m_{\mu} \}$. 
  Let $L_{\bullet}$ be the Lyubeznik resolution of $I$ with 
  the above order of generators. 
  Suppose that there exists a maximal $L$-admissible symbol 
  $e_{{\ell}_1 \cdots {\ell}_{s}} \in L_{s}$, 
  $L$-admissible symbols 
  $e_{{\ell}_1' \cdots {\ell}_{s}'} \in L_{s}$ 
  ($e_{{\ell}_1' \cdots {\ell}_{s}'} 
     \neq e_{{\ell}_1 \cdots {\ell}_{s}}$), 
  and elements $a_{{\ell}_1' \cdots {\ell}_{s}'} \in S$ such that
  \begin{equation}
    \label{eq:max}
    d_{s} ( e_{{\ell}_1 \cdots {\ell}_{s}}
     +\sum_{\{ {\ell}_1', \ldots, {\ell}_{s}' \} 
            \neq \{ {\ell}_1, \ldots, {\ell}_s \}} 
      a_{{\ell}_1' \cdots {\ell}_{s}'}
      e_{{\ell}_1' \cdots {\ell}_{s}'}) 
    \in (x_1, \ldots, x_{N}) L_{s-1}. 
  \end{equation}
  Put $\sigma = \deg e_{{\ell}_1 \cdots {\ell}_{s}}$. 
  Then $\beta_{s, \sigma} (S/I) \neq 0$. 
\end{proposition}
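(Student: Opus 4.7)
The plan is to compute $\Tor_s^S(S/I, K)$ via the Lyubeznik resolution $L_\bullet$, which by \cite{Ly} is a (generally non-minimal) free resolution of $S/I$; then $\beta_{s,\sigma}(S/I) = \dim_K H_s(L_\bullet \otimes_S K)_\sigma$, so it suffices to exhibit a cycle in $(L_s \otimes_S K)_\sigma$ whose homology class is nonzero. Writing $\frm = (x_1, \ldots, x_N)$, the idea is that condition (\ref{eq:max}) supplies such a cycle, and that maximality of $e_{\ell_1\cdots\ell_s}$ prevents it from being a boundary.

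First I would replace the element $z := e_{\ell_1 \cdots \ell_s} + \sum a_{\ell_1' \cdots \ell_s'} e_{\ell_1' \cdots \ell_s'}$ by its $\mathbb{N}^N$-homogeneous component of multidegree $\sigma$; since $d_s$ is graded and $\deg e_{\ell_1\cdots\ell_s} = \sigma$, this component still satisfies (\ref{eq:max}). Any term $a_{\ell'} e_{\ell'}$ that survives in degree $\sigma$ with $\deg e_{\ell'}$ a proper divisor of $\sigma$ has $a_{\ell'}$ of positive degree, and hence vanishes modulo $\frm$. Therefore the image $\bar z \in (L_s \otimes_S K)_\sigma$ takes the form
\begin{displaymath}
  \bar z = [e_{\ell_1\cdots\ell_s}] + \sum_{\substack{\deg e_{\ell_1'\cdots\ell_s'} = \sigma \\ \{\ell_1', \ldots, \ell_s'\}\neq\{\ell_1, \ldots, \ell_s\}}} c_{\ell'} [e_{\ell_1'\cdots\ell_s'}], \qquad c_{\ell'} \in K,
\end{displaymath}
and (\ref{eq:max}) says that $\bar z$ is a cycle for the induced differential $\bar d_s$ on $L_\bullet \otimes_S K$.

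The crucial step is to use maximality to show that $[e_{\ell_1 \cdots \ell_s}]$ never appears with a nonzero coefficient in $\bar d_{s+1}([e_{k_1 \cdots k_{s+1}}])$ for any $L$-admissible $(s+1)$-symbol. Since $L_\bullet$ is a subcomplex of the Taylor resolution, the Taylor formula for $d_{s+1}$ shows that after reducing modulo $\frm$ the coefficient at the face $[e_{k_1 \cdots \widehat{k_t} \cdots k_{s+1}}]$ is the image of $\pm\lcm(m_{k_1},\ldots,m_{k_{s+1}})/\lcm(m_{k_1},\ldots,\widehat{m_{k_t}},\ldots,m_{k_{s+1}})$, which is nonzero (equal to $\pm 1$) only if dropping $m_{k_t}$ does not alter the lcm. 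For a nonzero contribution to the face $[e_{\ell_1 \cdots \ell_s}]$ one would need an $L$-admissible symbol $e_{k_1 \cdots k_{s+1}}$ with $\{\ell_1,\ldots,\ell_s\} \subsetneq \{k_1,\ldots,k_{s+1}\}$, which is forbidden precisely by the maximality of $e_{\ell_1\cdots\ell_s}$. Thus $[e_{\ell_1 \cdots \ell_s}] \notin \Image \bar d_{s+1}$; since $\bar z$ has coefficient $1$ on this basis vector, it represents a nonzero class in $H_s(L_\bullet \otimes_S K)_\sigma$, and hence $\beta_{s,\sigma}(S/I) \neq 0$.

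The main delicate point — the only real obstacle — lies in the reduction modulo $\frm$ of the Taylor differential restricted to the Lyubeznik subcomplex: one needs that only the lcm-preserving faces contribute and that the result remains inside the $L$-admissible subcomplex, so that the combinatorial maximality hypothesis alone rules out the appearance of $[e_{\ell_1 \cdots \ell_s}]$ in any boundary. Once this is verified, the rest of the argument is a direct tracking of the coefficient on the distinguished basis element.
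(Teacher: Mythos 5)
Your argument is correct and follows essentially the same route as the paper: pass to $K \otimes_S L_{\bullet}$, observe that the homogeneous degree-$\sigma$ component of $\xi = e_{\ell_1\cdots\ell_s} + \sum a_{\ell'}e_{\ell'}$ gives a cycle with coefficient $1$ on $[e_{\ell_1\cdots\ell_s}]$, and use maximality to rule out that basis vector appearing in any boundary from $L_{s+1}$. The ``delicate point'' you flag is not an obstacle --- faces of $L$-admissible symbols are again $L$-admissible (the tail lcm only shrinks upon deletion), so the Lyubeznik complex is closed under the Taylor differential and your coefficient analysis goes through exactly as written; the paper simply states this step more tersely.
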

\begin{proof}
  We prove $[\Tor_s^S (K, S/I)]_{\sigma} \neq 0$. 
  Since the Lyubeznik resolution 
  is a free resolution of $S/I$, 
  it is sufficient to prove that the $s$th homology 
  of $K \otimes L_{\bullet}$ in degree $\sigma$ does not vanish. 
  We set 
  \begin{displaymath}
    \xi = e_{{\ell}_1 \cdots {\ell}_{s}}
       +\sum_{\{ {\ell}_1', \ldots, {\ell}_{s}' \} 
            \neq \{ {\ell}_1, \ldots, {\ell}_s \}} 
        a_{{\ell}_1' \cdots {\ell}_{s}'}
        e_{{\ell}_1' \cdots {\ell}_{s}'}. 
  \end{displaymath}
  Note that we may assume that $\xi$ is a homogeneous elements 
  of degree $\sigma$. 
  Then the assumption (\ref{eq:max}) implies that 
  $1 \otimes \xi \in \Ker (1 \otimes d_s)$. 
  Also $1 \otimes \xi \notin \Image (1 \otimes d_{s+1})$ because of 
  the maximality of $e_{{\ell}_1 \cdots {\ell}_{s}}$. 
  Hence we have $[\Tor_s^S (K, S/I)]_{\sigma} \neq 0$, as required. 
\end{proof}

\par
Let $I'$ and $I''$ be monomial ideals of $S' = K[x_1, \ldots, x_{N'}]$ and 
$S'' = K[y_1, \ldots, y_{N''}]$ respectively. 
Put $S = K[x_1, \ldots, x_{N'}, y_1, \ldots, y_{N''}]$. 
Then we can consider the monomial ideal $I = I'S + I'' S$ of $S$. 
It is easy to see that 
if $\beta_{i', {\sigma}'} (S'/I') \neq 0$ and 
$\beta_{i'', {\sigma}''} (S''/I'') \neq 0$, 
then $\beta_{i'+i'', ({\sigma}', {\sigma}'')} (S/I) \neq 0$. 
Now let us consider the case where $I$ has other monomial generators. 
Precisely, when a monomial ideal $I \subset S$ satisfies 
\begin{equation}
  \label{eq:generators}
  \mathcal{G} (I) \supset \mathcal{G} (I'S) \sqcup \mathcal{G} (I'' S), 
\end{equation}
can we obtain any non-vanishment of Betti numbers of $S/I$ 
from that of $S'/I'$ and $S''/I''$? 

\par
We give a partial answer to the above question 
observing Lyubeznik resolutions again. 
We first fix notations. 
We use indices ${\ell}$, ${\ell}'$, and ${\ell}''$ 
for $I$, $I'$ and $I''$, respectively. 
For simplicity, we denote an $L$-admissible symbol by 
$[{\ell}_1, \ldots, {\ell}_s]$ 
instead of $e_{{\ell}_1 \cdots {\ell}_s}$ and so on. 
We fix orders on $\mathcal{G} (I')$ and $\mathcal{G} (I'')$ 
respectively and consider the Lyubeznik resolutions with respect to the orders. 
Since $I$ satisfies (\ref{eq:generators}), $I$ can be written as 
$I = I'S + I''S + J$, where $J$ is a monomial ideal of $S$ with 
$\mathcal{G} (I) = \mathcal{G} (I'S) \sqcup \mathcal{G} (I''S) 
   \sqcup \mathcal{G} (J)$. 
Then we fix the order on $\mathcal{G} (I)$ as follows: 
first we order $\mathcal{G} (I'S)$ as the same order as $\mathcal{G} (I')$, 
next we order $\mathcal{G} (I''S)$ as the same order as $\mathcal{G} (I'')$, 
and finally we order $\mathcal{G} (J)$ arbitrarily. 
We denote this order simply by 
$\mathcal{G} (I'S), \mathcal{G} (I''S), \mathcal{G} (J)$. 
Let $[{\ell}_1, \ldots, {\ell}_s]$ be an $L$-admissible symbol of $I$ 
with respect to the above order such that 
each ${\ell}_t$ corresponds to a monomial in 
$\mathcal{G} (I') \sqcup \mathcal{G} (I'')$, that is, 
\begin{displaymath}
  [{\ell}_1, \ldots, {\ell}_s]
  = [{\ell}_1', \ldots, {\ell}_{s'}', \# \mathcal{G} (I') + {\ell}_1'', 
      \ldots, \# \mathcal{G} (I') + {\ell}_{s''}'']. 
\end{displaymath}
Then we say that $[{\ell}_1, \ldots, {\ell}_s]$ is the product of 
$[{\ell}_1', \ldots, {\ell}_{s'}']$ and $[{\ell}_1'', \ldots, {\ell}_{s''}'']$ 
and denote it by 
\begin{displaymath}
  [{\ell}_1', \ldots, {\ell}_{s'}'][{\ell}_1'', \ldots, {\ell}_{s''}'']. 
\end{displaymath}

\par
We note that, for any $L$-admissible symbol 
of $I'$ and any $L$-admissible symbol of $I''$, 
the product of these is an $L$-admissible symbol of $I$ with respect to 
the above order. 

\par
Now we answer the question posed above with some maximal assumption. 
In the following proposition, we use similar notations defined above. 
\begin{proposition}
  \label{Lyu-resBettiMany}
  Let $K$ be a field. We consider $r$ polynomial rings 
  $S^{(1)}, \ldots, S^{(r)}$ which have no common variables: 
  $S^{(k)} = K[x_1^{(k)}, \ldots, x_{N_k}^{(k)}]$. 
  Let $I_k$ be a monomial ideal of $S^{(k)}$ with the minimal 
  system of monomial generators $m_1^{(k)}, \ldots, m_{\mu_k}^{(k)}$, 
  and $L_{\bullet}^{(k)}$ be the Lyubeznik resolution of $I_k$ with 
  the above order of generators. 
  Assume that there exist an $L$-admissible symbol 
  ${\tau}^{(k)} \in L_{s_k}^{(k)}$, 
  $L$-admissible symbols 
  ${{\tau}'}^{(k)} \in L_{s_k}^{(k)}$ (${{\tau}'}^{(k)} \neq {\tau}^{(k)}$), 
  and elements $a_{{{\tau}'}^{(k)}}^{(k)} \in S^{(k)}$ such that
  \begin{displaymath}
    d_{s_k}^{(k)} ({\tau}^{(k)} 
     +\sum_{{{\tau}'}^{(k)} \neq {\tau}^{(k)}} 
      a_{{\tau}'^{(k)}}^{(k)}
      {\tau'}^{(k)}) 
    \in (x_1^{(k)}, \ldots, x_{N_k}^{(k)}) L_{s_k-1}^{(k)}. 
  \end{displaymath}
  Put $\sigma_k = \deg {\tau}^{(k)}$. 
  
  \par
  Let $S$ be a polynomial ring over $K$ in variables $x_j^{(k)}$ 
  $(1 \leq k \leq r$; $1 \leq j \leq N_k)$. 
  Let us consider the following monomial ideal of $S$: 
  \begin{displaymath}
    I = I_1 S + \cdots + I_r S + J, 
  \end{displaymath}
  where $J$ is a monomial ideal whose monomial generators do not 
  belong to $S^{(k)}$ for all $1 \leq k \leq r$. 
  Suppose that $L$-admissible symbol 
  $\prod_{k=1}^{r} {\tau}^{(k)}$ 
  is maximal with the following order of $\mathcal{G} (I)$: 
  \begin{displaymath}
    m_1^{(1)}, \ldots, m_{\mu_1}^{(1)}, \  \ldots, \  
    m_1^{(r)}, \ldots, m_{\mu_r}^{(r)}, \  \mathcal{G} (J). 
  \end{displaymath}
  Then 
  \begin{displaymath}
    \beta_{s_1+ \cdots + s_r, (\sigma_1, \ldots, \sigma_r)} 
      (S/I) \neq 0. 
  \end{displaymath}
\end{proposition}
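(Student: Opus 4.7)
The plan is to reduce this to Proposition \ref{Lyu-resBetti} applied to the ideal $I$ with its given order of generators. By hypothesis the product symbol $\tau := \prod_{k=1}^r \tau^{(k)}$ is already a maximal $L$-admissible symbol of degree $(\sigma_1, \ldots, \sigma_r)$ and homological degree $s := s_1 + \cdots + s_r$, so what remains is to cook up a cycle-modulo-$\mathfrak{m}$ lift of it: an element $\xi \in L_s$ whose leading term is $\tau$ and whose differential lies in $\mathfrak{m} L_{s-1}$, where $\mathfrak{m}$ is the graded maximal ideal of $S$. Proposition \ref{Lyu-resBetti} will then directly yield the desired non-vanishment.

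For the construction, write $\xi^{(k)} := \tau^{(k)} + \sum_{{\tau'}^{(k)} \neq \tau^{(k)}} a^{(k)}_{{\tau'}^{(k)}} {\tau'}^{(k)}$ and set
\[
  \xi \ := \ \prod_{k=1}^r \xi^{(k)}
  \ = \ \sum_{{\tau'}^{(1)}, \ldots, {\tau'}^{(r)}}
         \Bigl(\prod_{k=1}^r a^{(k)}_{{\tau'}^{(k)}}\Bigr) \
         {\tau'}^{(1)} \cdots {\tau'}^{(r)},
\]
where we use the coefficient $a^{(k)}_{\tau^{(k)}} = 1$ and the product of $L$-admissible symbols as defined in the paragraph preceding the statement. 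Each product ${\tau'}^{(1)} \cdots {\tau'}^{(r)}$ is an $L$-admissible symbol of $I$ in the prescribed order, as noted there, so $\xi$ is genuinely an element of $L_s$.

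The heart of the proof is a Leibniz-type formula for the Taylor differential on products. Because the variable sets of $S^{(1)}, \ldots, S^{(r)}$ are disjoint, for any $L$-admissible symbols ${\tau'}^{(k)} \in L^{(k)}_{s_k}$ the least common multiples factor as $\lcm({\tau'}^{(1)} \cdots {\tau'}^{(r)}) = \prod_k \lcm({\tau'}^{(k)})$, and an analogous factorization holds after deleting any single index; the Taylor coefficient $\lcm/\lcm$ accompanying a deletion from the $k$th block therefore involves only $S^{(k)}$ and coincides with the coefficient appearing in $d^{(k)}_{s_k}$. Hence
\[
  d_s\Bigl(\prod_{k=1}^r {\tau'}^{(k)}\Bigr)
  \ = \ \sum_{k=1}^r (-1)^{s_1 + \cdots + s_{k-1}}
         \ d^{(k)}_{s_k}({\tau'}^{(k)}) \ \prod_{j \neq k} {\tau'}^{(j)}.
\]
One checks that every sub-symbol produced by such a deletion remains $L$-admissible in its native $L^{(k)}_\bullet$ (removing a generator only shrinks the relevant $\lcm$'s), so its product with the remaining ${\tau'}^{(j)}$'s lies in $L_{s-1}$ and no terms outside the Lyubeznik subcomplex appear. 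Summing over the ${\tau'}^{(k)}$'s with weights $\prod_k a^{(k)}_{{\tau'}^{(k)}}$ yields
\[
  d_s(\xi) \ = \ \sum_{k=1}^r (-1)^{s_1 + \cdots + s_{k-1}}
           \ d^{(k)}_{s_k}(\xi^{(k)}) \ \prod_{j \neq k} \xi^{(j)},
\]
and the hypothesis places each $d^{(k)}_{s_k}(\xi^{(k)})$ inside $(x_1^{(k)}, \ldots, x_{N_k}^{(k)}) L^{(k)}_{s_k-1} \subset \mathfrak{m} L^{(k)}_{s_k-1}$. Therefore $d_s(\xi) \in \mathfrak{m} L_{s-1}$, and Proposition \ref{Lyu-resBetti} delivers the conclusion. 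The only real obstacle is bookkeeping the Leibniz formula with correct signs and verifying that the disjoint-variable structure makes each block's differential decouple; after that the argument is formal.
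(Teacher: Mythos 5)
Your proposal is correct and follows essentially the same route as the paper: form $\xi = \prod_k \xi^{(k)}$, verify the Leibniz-type formula $d_s(\xi) = \sum_k (-1)^{s_1+\cdots+s_{k-1}} \xi^{(1)} \cdots d^{(k)}_{s_k}(\xi^{(k)}) \cdots \xi^{(r)} \in \mathfrak{m} L_{s-1}$ using the disjointness of the variable sets, and invoke Proposition \ref{Lyu-resBetti} via the assumed maximality of $\prod_k \tau^{(k)}$. The only difference is that you spell out the lcm factorization justifying the Leibniz formula, which the paper leaves implicit.
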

\begin{remark}
  The maximality of the $L$-admissible symbol 
  $\prod_{k=1}^{r} {\tau}^{(k)}$ 
  does not depend on the order of $\mathcal{G} (J)$. 
\end{remark}
\begin{proof}
  We set 
  \begin{displaymath}
    {\xi}_k = {\tau}^{(k)} 
            +\sum_{{{\tau}'}^{(k)} \neq {\tau}^{(k)}} 
              a_{{\tau}'^{(k)}}^{(k)}
              {\tau'}^{(k)} 
  \end{displaymath}
  and 
  \begin{displaymath}
    {\xi} = \prod_{k=1}^r {\xi}_k. 
  \end{displaymath}
  (The meaning of the notation is similar to the one 
  which we defined before the proposition.) 
  Note that we may assume that each ${\xi}_k$ is homogeneous, and thus, 
  $\xi$ is homogeneous. 
  Then the maximal $L$-admissible symbol 
  $\prod_{k=1}^{r} \tau^{(k)}$ appears 
  as a summand of $\xi$ with coefficient $1$. 
  Moreover, 
  \begin{displaymath}
    d_{s_1 + \cdots + s_r} ({\xi}) 
    = \sum_{k=1}^r (-1)^{s_1 + \cdots + s_{k-1}} 
        {\xi}_1 \cdots {\xi}_{k-1} d_{s_k}^{(k)} ({\xi}_k)
        {\xi}_{k+1} \cdots {\xi}_r 
    \subset \mathfrak{m} L_{s_1 + \cdots + s_r - 1}, 
  \end{displaymath}
  where $\mathfrak{m}$ is the unique graded maximal ideal of $S$. 
  Therefore by Proposition \ref{Lyu-resBetti}, we have the desired conclusion. 
\end{proof}

\section{Proof of the main theorem}
\label{sec:proof}
In this section, we prove Theorem \ref{MainIntro}. 
Recall that Theorem \ref{MainIntro} is a generalization 
of \cite[Proposition 2.5]{Katzman} (Theorem \ref{KatzmanBetti}) and 
\cite[Theorem 3.1]{Kimura} (Theorem \ref{KimuraBetti}). 
Katzman proved 
\cite[Proposition 2.5]{Katzman} 
by using of Taylor resolution. 
After, the author proved 
\cite[Theorem 3.1]{Kimura}
from the observation of Lyubeznik resolutions given 
by Barile \cite[Remark 1]{Barile05}; see the previous section. 
On the other hand, Theorem \ref{MainIntro} is proved by 
Propositions \ref{Lyu-resBetti} and \ref{Lyu-resBettiMany}, 
which were obtained by observing Lyubeznik resolutions more detailed. 

\par
First, we prove the following lemma. 
\begin{lemma}
  \label{CompleteBipartite}
  Let $G$ be a graph on a vertex set $V$. 
  Assume that $G$ contains a complete bipartite graph of type $(m,n)$ 
  as a spanning subgraph. 
  Then by considering suitable order on $\mathcal{G} (I(G))$, 
  the edge ideal $I(G)$ admits the assumption of Proposition \ref{Lyu-resBetti} 
  with $s= m+n-1$ and $\sigma = V$. 
  In particular, $\beta_{m+n-1, V} (S/I(G)) \neq 0$. 
\end{lemma}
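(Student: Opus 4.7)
The plan is to exhibit an explicit $L$-admissible symbol $\tau$ of degree $V$ that is maximal, then build a cycle $\xi = \tau + \sum a_{\tau'}\tau'$ witnessing the hypothesis of Proposition \ref{Lyu-resBetti}. Write the bipartition of the spanning $K_{m,n}$ as $V = V_1 \sqcup V_2$ with $V_1 = \{u_1,\ldots,u_m\}$, $V_2 = \{w_1,\ldots,w_n\}$ and $e_{i,j} = \{u_i,w_j\}$. I order $\mathcal{G}(I(G))$ so that the first $m+n-1$ generators are the double-star spanning tree edges
$$ e_{1,2},\ e_{1,3},\ \ldots,\ e_{1,n},\ e_{2,1},\ e_{3,1},\ \ldots,\ e_{m,1},\ e_{1,1}, $$
followed by the remaining $K_{m,n}$-edges $e_{i,j}$ with $i,j \ge 2$, and then any edges of $G$ lying inside $V_1$ or $V_2$. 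The candidate symbol is $\tau = [1,2,\ldots,m+n-1]$, which has multidegree $V$.

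Next I would verify $L$-admissibility of $\tau$ by a direct tail-lcm computation: at position $t \le n-1$ the tail $\lcm(m_t,\ldots,m_{m+n-1})$ misses precisely $\{w_2,\ldots,w_t\}$, while at $t \ge n$ it misses $\{w_2,\ldots,w_n\} \cup \{u_2,\ldots,u_{t-n+1}\}$. Since each earlier generator $m_q$ is of the form $u_1 w_{q+1}$ or $u_{q-n+2} w_1$, the missing variable always blocks divisibility. For maximality, I analyze a hypothetical extension $[1,2,\ldots,m+n-1,k]$: if $m_k = e_{i,j}$ with $i,j \ge 2$, the added $w_j$ at position $t = j$ lets $m_{j-1} = e_{1,j}$ divide the new tail-lcm; if $m_k = \{u_i, u_{i'}\}$ with $i < i'$, the added $u_{i'}$ at position $t = m+n-1$ lets $m_{i'+n-2} = e_{i',1}$ divide; and $V_2$-edges $\{w_j, w_{j'}\}$ are ruled out analogously. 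Computing $d(\tau)$, each position $t < m+n-1$ contributes a coefficient in $\mathfrak{m}$ (the variable isolated when the corresponding leaf edge is removed from $T$), while position $m+n-1$---removing the central edge $e_{1,1}$---leaves all vertices covered and gives a single unit-coefficient summand $\pm[\tau_0]$ with $\tau_0 = T \setminus \{e_{1,1}\}$.

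If $m=1$ or $n=1$ this last case does not arise and $\xi = \tau$ itself satisfies $d(\xi) \in \mathfrak{m} L_{m+n-2}$. For $m,n \ge 2$ I would cancel $\pm[\tau_0]$ by adding $\mp\tau^{(1)}$ with $\tau^{(1)} = \tau_0 \cup \{e_{m,n}\}$, which remains $L$-admissible (the tail past position $n$ now omits $u_1$, so earlier generators still cannot divide). Its boundary supplies the desired $\pm[\tau_0]$ together with at most two further unit-coefficient summands, from dropping $e_{1,n}$ or $e_{m,1}$; these secondary bad terms are cancelled in turn by adding further $L$-admissible symbols of the form $\tau_0' \cup \{e_{i,j}\}$. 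The main obstacle is to organize this cascade so that it terminates uniformly in $(m,n)$ with a well-defined $\xi$; I would handle this by induction on $\min(m,n)$, reducing the residual cancellations to a smaller bipartite subproblem. Alternatively, one may invoke Hochster's formula---$\beta_{m+n-1,V}(S/I(G)) = \dim_K \tilde{H}^0(\Delta_G;K) \ge 1$ since the independence complex $\Delta_G$ is split between $V_1$ and $V_2$---together with the fact that maximality of $\tau$ places it outside the image of $\bar{d}_{m+n}$ in the Lyubeznik complex modulo $\mathfrak{m}$, which forces some cycle representative to have nonzero coefficient on $\tau$ and yields the required $\xi$ via Proposition \ref{Lyu-resBetti}.
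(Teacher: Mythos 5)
Your setup is sound: the double-star order is a legitimate choice, your $\tau=[1,\dots,m+n-1]$ is indeed $L$-admissible and maximal, and your analysis of $\bar d(\tau)$ correctly isolates the single unit-coefficient term $\pm[\tau_0]$ coming from deleting the central edge $e_{1,1}$. But the proof is not complete, for two reasons. First, the construction of the cycle $\xi$ is exactly the hard combinatorial content of the lemma, and you leave it unfinished: you add $\tau^{(1)}=\tau_0\cup\{e_{m,n}\}$, observe that it spawns further unit-coefficient terms, and then say the resulting cascade ``is the main obstacle'' to be handled by an unspecified induction. That is the step that must actually be carried out. The paper avoids the cascade entirely by choosing the full grid order ($u_1v_1,\dots,u_mv_1,u_1v_2,\dots$) and writing down a closed-form cycle, namely the signed sum $\xi=\sum (-1)^{t_1+\cdots+t_{n-1}}\tau(t_1,\dots,t_{n-1})$ over \emph{all} staircase symbols with $1\le t_1\le\cdots\le t_{n-1}\le m$; there the unit-coefficient boundary terms cancel in adjacent pairs via the identity $\tau(\dots;\alpha+)=\tau(\dots,t_\alpha-1,\dots;(\alpha+1)-)$, and the maximal symbol $\tau(m,\dots,m)$ appears with coefficient $1$. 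Your order would require an analogous explicit closed form (a signed sum over a suitable family of spanning trees of $K_{m,n}$ containing no statement of which is given), so as written the argument has a genuine gap.

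Second, the fallback argument does not close that gap. Hochster's formula does give $\beta_{m+n-1,V}(S/I(G))=\dim_K\tilde H_0(\Delta(G);K)\ne 0$ immediately, but that only proves the ``in particular'' clause, not the lemma as stated: the lemma asserts that $I(G)$ \emph{admits the hypothesis of Proposition \ref{Lyu-resBetti}}, i.e., that the specific maximal symbol $\tau$ extends to a cycle $\xi$ with $\bar d(\xi)=0$, and this stronger form is what the proof of Theorem \ref{MainIntro} consumes through Proposition \ref{Lyu-resBettiMany} (the cycles $\xi_k$ for the various $B_k$ are multiplied together). Moreover, the inference ``homology is nonzero and $\tau$ lies outside the image of $\bar d_{m+n}$, hence some cycle has nonzero coefficient on $\tau$'' is not valid: writing $W$ for the span of the admissible symbols other than $\tau$ in that homological degree and multidegree, it can happen that $\ker\bar d\subset W$ while $\ker\bar d\supsetneq \operatorname{im}\bar d$, so nonvanishing of the homology does not force $\tau$ to support a cycle. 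You must exhibit $\xi$ explicitly, which is precisely what the paper's staircase sum does.
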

\begin{proof}
  Since $G$ contains a complete bipartite graph of type $(m,n)$ 
  as a spanning subgraph, 
  the vertex set $V$ is decomposed as 
  $V = \{ u_1, \ldots, u_m \} \sqcup \{ v_1, \ldots, v_n \}$ 
  so that $\{ u_{\alpha}, v_{\beta} \} \in E(G)$ for all $\alpha, \beta$. 
  We order the elements of $\mathcal{G} (I(G))$ as follows: 
  \begin{equation}
    \label{order1}
    \begin{aligned}
      &u_1 v_1, \; u_2 v_1, \; \ldots, \; u_m v_1, \\
      &\quad u_1 v_2, \; u_2 v_2, \; \ldots, \; u_m v_2, \\
      &\qquad \qquad \ldots, \\
      &\qquad \quad u_1 v_n, \; u_2 v_n, \; \ldots, \; u_m v_n, 
       \  \text{other generators}. 
    \end{aligned}
  \end{equation}
  Let $L_{\bullet}$ be the Lyubeznik resolution of $I(G)$ with respect to 
  the order (\ref{order1}). 
  Consider the following subsequence of (\ref{order1}): 
  \begin{equation}
    \label{suborder1}
    \begin{aligned}
      &u_1 v_1, \; u_2 v_1, \; \ldots, \; u_{t_1} v_1, \\
      &\quad u_{t_1} v_2, \; u_{t_1 + 1} v_2, \; \ldots, \; u_{t_2} v_2, \\
      &\qquad \qquad \ldots, \\
      &\qquad \quad u_{t_{n-1}} v_n, \; u_{t_{n-1} + 1} v_n, \; \ldots, \; 
                      u_{m} v_n, 
    \end{aligned}
  \end{equation}
  where $1 \leq t_1 \leq t_2 \leq \cdots \leq t_{n-1} \leq m$. 
  It is easy to see that the corresponding symbol, 
  we denote it by $\tau (t_1, \ldots, t_{n-1})$, 
  is $L$-admissible and belongs to $L_{m+n-1}$. 
  Note that $\deg \tau (t_1, \ldots, t_{n-1}) = V$. 

  \par
  We claim that $\tau (m, \ldots, m)$ is a maximal $L$-admissible symbol. 
  Note that $\tau (m, \ldots, m)$ corresponds to the following subsequence 
  of (\ref{order1}): 
  \begin{equation}
    \label{max-order1}
    u_1 v_1, \; u_2 v_1, \; \ldots, \; u_m v_1, \  
    u_m v_2, \; \ldots, \; u_m v_n. 
  \end{equation}
  Let $e'$ be an edge of $G$ whose corresponding monomial $M'$
  does not lie in (\ref{max-order1}). 
  To prove that $\tau (m, \ldots, m)$ is maximal, 
  it is sufficient to show that 
  the symbol obtained by adding $M'$ to (\ref{max-order1}), we denote it by 
  ${\tau}'$, is not $L$-admissible. 
  Note that $M'$ does not lie in the first line of (\ref{order1}). 
  Suppose that $e'$ contains $u_{\alpha}$. 
  Since the monomial $u_m v_1 \cdot u_{\alpha}$ is divisible by 
  $u_{\alpha} v_1$, 
  ${\tau}'$ is not $L$-admissible unless $u_{\alpha} = u_m$. 
  Therefore we may assume $e' = \{ u_m, v_{\beta} \}$. 
  But $u_m v_{\beta}$ lies in (\ref{max-order1}), a contradiction. 
  Thus $e' \subset \{ v_1, \ldots, v_n \}$. Then $e'$ contains $v_{\beta}$ 
  for some $\beta < n$ and $M'$ belongs to other generators of (\ref{order1}). 
  Since $u_m v_n \cdot v_{\beta}$ is divisible by $u_m v_{\beta}$, 
  ${\tau}'$ is not $L$-admissible, as required. 

  \par
  Now we consider 
  \begin{displaymath}
    \xi := \sum_{1 \leq t_1 \leq \cdots \leq t_{n-1} \leq m} 
           (-1)^{t_1 + \cdots + t_{n-1}} \tau (t_1, \ldots, t_{n-1}). 
  \end{displaymath}
  Let $\mathfrak{m}$ be the graded maximal ideal of $K[V]$. 
  Then 
  \begin{displaymath}
    \begin{aligned}
    &d_{m+n-1} (\tau (t_1, \ldots, t_{n-1})) \\
    &\equiv \sum_{\genfrac{}{}{0pt}{}{1 < \alpha < n}{t_{\alpha - 1} < t_{\alpha}}} 
      ((-1)^{t_{\alpha - 1} + \alpha - 2} \tau (t_1, \ldots, t_{n-1}; {\alpha}-) 
       + (-1)^{t_{\alpha} + \alpha - 2} \tau (t_1, \ldots, t_{n-1}; {\alpha}+)) \\
    &\quad + (-1)^{t_1-1} {\tau}_1 + (-1)^{t_{n-1} + n-2} {\tau}_{n}
     \mod \mathfrak{m} L_{m+n-2}, 
    \end{aligned}
  \end{displaymath}
  where 
  \begin{displaymath}
    \begin{aligned}
    {\tau}_1 &= \left\{
    \begin{alignedat}{3}
      &0, &\quad &\text{if $t_1 = 1$}, \\
      &\tau (t_1, \ldots, t_{n-1}; 1+), &\quad &\text{if $t_1 > 1$}, 
    \end{alignedat}
    \right. \\
    {\tau}_{n} &= \left\{
    \begin{alignedat}{3}
      &0, &\quad &\text{if $t_{n-1} = m$}, \\
      &\tau (t_1, \ldots, t_{n-1}; n-), &\quad &\text{if $t_{n-1} < m$}, 
    \end{alignedat}
    \right. 
    \end{aligned}
  \end{displaymath}
  and where $\tau (t_1, \ldots, t_{n-1}; \alpha -)$ 
  (resp.\  $\tau (t_1, \ldots, t_{n-1}; \alpha +)$) is an $L$-admissible symbol 
  obtained by omitting $u_{t_{\alpha - 1}} v_{\alpha}$ 
  (resp.\  $u_{t_{\alpha}} v_{\alpha}$) 
  from $\tau (t_1, \ldots, t_{n-1})$.  
  Since 
  \begin{displaymath}
    \begin{aligned}
      \tau (t_1, \ldots, t_{n-1}; \alpha +) 
      &= \tau (t_1, \ldots, t_{\alpha - 1}, t_{\alpha} - 1, 
               t_{\alpha + 1}, \ldots, t_{n-1}; ({\alpha}+1)-), \\
      \tau (t_1, \ldots, t_{n-1}; \alpha -) 
      &= \tau (t_1, \ldots, t_{\alpha-2}, t_{\alpha-1} + 1, 
               t_{\alpha}, \ldots, t_{n-1}; ({\alpha}-1)+) 
    \end{aligned}
  \end{displaymath}
  when $t_{\alpha -1} < t_{\alpha}$, 
  we can easily check that  $d_{m+n-1} (\xi) \in \mathfrak{m} L_{m+n-2}$. 

  \par
  Therefore the assertion follows. 
\end{proof}

Now we prove Theorem \ref{MainIntro}. 
\begin{proof}[Proof of Theorem \ref{MainIntro}]
  By Hochster's formula for Betti numbers (\cite[Theorem 5.5.1]{BrunsHerzog}), 
  we have 
  \begin{displaymath}
    \beta_{i, \sigma} (S/I(G)) = \beta_{i, \sigma} (K[{\sigma}]/I(G_{\sigma})). 
  \end{displaymath}
  Therefore we may assume $V = \sigma$. 
  For each $k = 1, \ldots, r$, 
  let $(m_k, n_k)$ be the type of the complete bipartite subgraph $B_k$. 
  Put 
  $V_k := V(B_k) = \{ u_1^{(k)}, \ldots, u_{m_k}^{(k)} \} 
                   \sqcup \{ v_1^{(k)}, \ldots, v_{n_k}^{(k)} \}$. 
  We may assume that $e_1, \ldots, e_r$ are pairwise $3$-disjoint 
  where $e_k = \{ u_{m_k}^{(k)}, v_{n_k}^{(k)} \}$. 
  We order the elements of $\mathcal{G} (I(G_{V_k}))$ 
  as in the proof of Lemma \ref{CompleteBipartite}. 
  We use the same notation as in Lemma \ref{CompleteBipartite} 
  with upper subscript $(k)$. 
  We order the elements of $\mathcal{G} (I(G))$ as follows: 
  \begin{equation}
    \label{orderM}
    \mathcal{G} (I(G_{V_1})), \; \mathcal{G} (I(G_{V_2})), \; \ldots, \; 
    \mathcal{G} (I(G_{V_r})), \; \text{other generators}. 
  \end{equation}
  We set $\tau = \prod_{k=1}^r \tau (m_k, \ldots, m_k)^{(k)}$, 
  which is an $L$-admissible symbol of $I(G)$. 
  Since $\tau (m_k, \ldots, m_k)^{(k)} \in L_{m_k + n_k -1}^{(k)}$ 
  and $\deg \tau (m_k, \ldots, m_k)^{(k)} = V_k$, 
  we have $\tau \in L_{\# V - r}$ and $\deg \tau = V = \sigma$. 
  Therefore by Propositions \ref{Lyu-resBetti} and \ref{Lyu-resBettiMany} 
  and Lemma \ref{CompleteBipartite}, 
  for proving the theorem, 
  it is enough to show that $\tau$ is a maximal $L$-admissible symbol, 
  i.e., if we add the generator of $I(G)$ corresponding to 
  another edge $e'$ to $\tau$, then it is not an 
  $L$-admissible symbol. 

  \par
  Let $M'$ be the corresponding monomial to $e'$, 
  and ${\tau}'$ a symbol 
  obtained by adding $M'$ to $\tau$. 
  When $e' \in E(G_{V_k})$, the maximality of $\tau (m_k, \ldots, m_k)^{(k)}$ 
  shows that ${\tau}'$ is not $L$-admissible. 
  Thus we may assume that $M'$ lies in 
  other generators of (\ref{orderM}). 
  Then, as we saw at the proof of Lemma \ref{CompleteBipartite}, 
  $e'$ must be consists of two vertices in 
  $\bigcup_{k=1}^r \{ u_{m_k}^{(k)}, v_{n_k}^{(k)} \}$. 
  Since $e' \neq e_1, \ldots, e_r$, this contradicts to 
  the pairwise $3$-disjointness of $e_1, \ldots, e_r$. 
\end{proof}

\section{Graded Betti numbers in the linear strand}
\label{sec:LinearStrand}
The converse of Theorem \ref{MainIntro} does not hold since
$\reg (S/I(G)) > a(G)$ holds in general 
as noted in Section \ref{sec:pre}. 
However in this section, we show that the converse of Theorem \ref{MainIntro} 
is true for the graded Betti numbers in the linear strand; 
see Proposition \ref{LinearStrand}. 
On there we will see that complete bipartite subgraphs naturally arise. 
The graded Betti numbers in the linear strand was investigated by 
Roth and Van Tuyl \cite{RVT}. They gave some formula for such 
Betti numbers in terms of graphic invariants. 
However we obtain another characterization of these Betti numbers. 
Also we review the graded Betti numbers of 
the edge ideals which have linear resolutions. 
In particular we give a combinatorial description for 
the projective dimension of such ideals. 

\par
We first recall the definition of the Stanley--Reisner ring. 
A \textit{simplicial complex} $\Delta$ on the vertex set $V$ is a collection 
of subsets of $V$ with the properties: 
(i) $\{ v \} \in \Delta$ for all $v \in V$; 
(ii) If $F \in \Delta$, then $G \in \Delta$ for all $G \subset F$. 
We associate a simplicial complex $\Delta$ on $V$ with 
the squarefree monomial ideal $I_{\Delta}$ of $S=K[V]$ by
\begin{displaymath}
  I_{\Delta} = ( x_F \; : \; F \subset V,\  F \notin \Delta), 
  \qquad x_F = \prod_{v \in F} v, 
\end{displaymath}
which is called the \textit{Stanley--Reisner ideal} of $\Delta$. 
The quotient ring $K[{\Delta}] := S/I_{\Delta}$ is called the 
\textit{Stanley--Reisner ring} of $\Delta$. 

\par
Let $G$ be a finite simple graph on the vertex set $V$. 
A subset $W \subset V$ is called an \textit{independent set} if 
any two vertices $w_1, w_2 \in W$ are not adjacent in $G$. 
The collection of all independent sets of $G$ forms a simplicial complex 
on $V$; we denote it by $\Delta (G)$. 
By Hochster's formula \cite[Theorem 5.5.1]{BrunsHerzog}, 
we have for $\sigma \subset V$ 
\begin{displaymath}
  \beta_{i, \sigma} (S/I(G)) 
  = \dim_K \tilde{H}_{|{\sigma}| - i - 1} ((\Delta (G))_{\sigma}; K), 
\end{displaymath}
where $(\Delta (G))_{\sigma}$ denotes the restriction of $\Delta (G)$ 
on $\sigma$, 
i.e., the simplicial complex consists of all faces of $\Delta (G)$ 
contained in $\sigma$. Note that $(\Delta (G))_{\sigma} = \Delta (G_{\sigma})$. 
Let us consider $\beta_{i, \sigma} (S/I(G))$, where $|{\sigma}| =i+1$, 
which corresponds to a graded Betti number in the linear strand of $I(G)$. 
Then, 
\begin{equation}
  \label{betti0}
  \begin{aligned}
    \beta_{i, \sigma} (S/I(G)) &= \dim_K \tilde{H}_0 (\Delta (G_{\sigma}); K) \\
      &= (\text{number of connected components of $\Delta (G_{\sigma})$}) - 1. 
  \end{aligned}
\end{equation}
Roth and Van Tuyl \cite{RVT} studied these graded Betti numbers 
on $\mathbb{N}$-graded case. 
In particular they gave an exact formulas for some classes of graphs 
in terms of graphic invariants. 
However from the view point of Theorem \ref{MainIntro}, 
we give another characterization of these Betti numbers. 
We define $c(G)$ as the maximum number $c$ so that 
$G$ contains a complete $c$-partite graph as a spanning subgraph; 
if $G$ does not contain a complete bipartite graph as a spanning subgraph, 
then we set $c(G) = 1$. 
\begin{proposition}
  \label{LinearStrand}
  Let $G$ be a finite simple graph on the vertex set $V$. 
  Let $i \geq 1$ be an integer and 
  $\sigma \subset V$ with $|{\sigma}| = i+1$. 
  Then
  $\beta_{i, \sigma} (S/I(G)) = c(G_{\sigma}) - 1$. 

  \par
  In particular, with the same assertions for $i$ and $\sigma$ as above, 
  $\beta_{i, \sigma} (S/I(G)) \neq 0$ if and only if 
  $G_{\sigma}$ contains a complete bipartite graph as a spanning subgraph. 
\end{proposition}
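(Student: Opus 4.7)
The plan is to unwind the reduction already made in equation (\ref{betti0}): the Betti number $\beta_{i,\sigma}(S/I(G))$ is one less than the number of connected components of the independence complex $\Delta(G_\sigma)$, so the entire proposition reduces to the purely combinatorial identity
\[
  \#\{\text{connected components of } \Delta(G_\sigma)\} \;=\; c(G_\sigma).
\]

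First I would pass from $\Delta(G_\sigma)$ to its $1$-skeleton. Since every vertex of $\sigma$ is a $0$-face of $\Delta(G_\sigma)$, the connected components of the complex coincide with those of the ordinary graph on $\sigma$ whose edges are the $1$-faces of $\Delta(G_\sigma)$; but these $1$-faces are exactly the independent pairs $\{u,v\}\subset\sigma$ in $G_\sigma$, equivalently the edges of the complement graph $\overline{G_\sigma}$. Thus the number of components of $\Delta(G_\sigma)$ equals the number of components of $\overline{G_\sigma}$.

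Next I would prove the graph-theoretic identity $c(H)=\#\{\text{components of }\overline{H}\}$ for any finite simple graph $H$. A partition $V(H)=V_1\sqcup\cdots\sqcup V_c$ realizes a complete $c$-partite spanning subgraph of $H$ if and only if $\overline{H}$ has no edge joining distinct parts, i.e.\ each connected component of $\overline{H}$ is contained in a single $V_i$. Hence every such partition is a coarsening of the connected-component partition of $\overline{H}$, which is itself admissible and has the maximal possible number of blocks; in particular the convention $c(H)=1$ (no complete bipartite spanning subgraph) matches the case when $\overline{H}$ is connected, since a disconnected $\overline{H}$ would furnish a bipartition of $V(H)$ with all cross edges in $H$. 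Combining the two steps proves the formula, and the ``in particular'' assertion is immediate, because $\beta_{i,\sigma}(S/I(G))\neq 0$ iff $c(G_\sigma)\geq 2$ iff $G_\sigma$ admits a complete bipartite graph as a spanning subgraph.

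The whole argument is essentially a dictionary between the independence complex of $G_\sigma$ and the complement graph $\overline{G_\sigma}$, so no single step is a serious obstacle; the only point requiring a brief verification is the bookkeeping of the boundary convention $c(H)=1$ described above.
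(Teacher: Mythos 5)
Your proof is correct and follows essentially the same route as the paper: both start from the reduction \eqref{betti0} via Hochster's formula and then identify the number of connected components of $\Delta(G_\sigma)$ (equivalently, of the complement graph, its $1$-skeleton) with $c(G_\sigma)$ by observing that admissible complete multipartite partitions are exactly the coarsenings of the component partition. Your explicit handling of the boundary convention $c(H)=1$ is a small but welcome addition; otherwise the argument matches the paper's.
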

\begin{proof}
  By (\ref{betti0}), 
  $\beta_{i, \sigma} (S/I(G)) \neq 0$ if and only if 
  $\Delta (G_{\sigma})$ is disconnected. 
  When this is the case, 
  the vertex set $\sigma$ of $\Delta (G_{\sigma})$ has a bipartition 
  $\sigma = {\sigma}_1 \sqcup {\sigma}_2$ such that 
  $\{ v_1, v_2 \} \notin \Delta (G_{\sigma})$ 
  for all $v_1 \in {\sigma}_1$, $v_2 \in {\sigma}_2$. 
  Since $\{ v_1, v_2 \} \notin \Delta (G_{\sigma})$ is equivalent to 
  $\{ v_1, v_2 \} \in E(G_{\sigma})$, this occurs if and only if 
  $G_{\sigma}$ contains complete bipartite graph 
  as a spanning subgraph 
  (with the same bipartition $\sigma = {\sigma}_1 \sqcup {\sigma}_2$). 

  \par
  Now assume that $\beta_{i, \sigma} (S/I(G)) \neq 0$ 
  and set $c := c(G_{\sigma})$. 
  Then $\sigma$ can be partitioned as 
  $\sigma_1 \sqcup \cdots \sqcup \sigma_{c}$ 
  with the property that $\{ v_s, v_t \} \in E(G_{\sigma})$ whenever 
  $v_s \in \sigma_s$ and $v_t \in \sigma_t$ where $s \neq t$. 
  This implies that $\Delta (G_{\sigma})$ has at least $c$ 
  connected components. 
  On the other hand, it also follows that if $\Delta (G_{\sigma})$ has $c'$ 
  connected components, then $G_{\sigma}$ contains a 
  complete $c'$-partite graph as a spanning subgraph. 
  As a consequence we have $c' = c$. Then the assertion follows 
  from (\ref{betti0}). 
\end{proof}

\par
For a finite simple graph $G$ on the vertex set $V$, 
we denote by $G^c$ the \textit{complementary graph}, that is a graph on $V$ 
where two vertices $v_1, v_2 \in V$ are adjacent in $G^c$ if and only if 
those are not adjacent in $G$. 
A graph $G$ is called \textit{co-chordal} when $G^c$ is chordal. 
Fr\"{o}berg \cite[Theorem 1]{Froberg} proved that $I(G)$ 
has a linear resolution if and only if $G$ is a co-chordal graph. 
When this is the case, 
$\beta_{i, \sigma} (S/I(G)) = 0$ for all $i \geq 1$ and for 
all $\sigma \subset V$ with $|{\sigma}| \geq i+2$. 
Thus we have the following corollary. 
\begin{corollary}
  \label{co-chordalBetti}
  Let $G$ be a co-chordal graph. 
  Then for $i \geq 1$, $\beta_{i, \sigma} (S/I(G)) \neq 0$ if and only if 
  $|{\sigma}| = i + 1$ and $G_{\sigma}$ contains a complete bipartite subgraph 
  as a spanning subgraph. 
\end{corollary}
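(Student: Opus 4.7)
The plan is to derive this corollary by combining Fröberg's theorem with Proposition \ref{LinearStrand}. Since $G$ is co-chordal, Fröberg's theorem tells us that $I(G)$ has a linear resolution. Because $I(G)$ is generated in degree $2$, a linear resolution means that $\beta_{i,j}(I(G)) = 0$ unless $j = i + 2$. Passing from $I(G)$ to $S/I(G)$ via the standard degree shift $\beta_{i,\sigma}(S/I(G)) = \beta_{i-1,\sigma}(I(G))$ for $i \geq 1$, this forces $\beta_{i,\sigma}(S/I(G)) = 0$ whenever $i \geq 1$ and $|\sigma| \neq i+1$. (The non-squarefree multidegrees already vanish by the Hochster formula recalled in Section \ref{sec:pre}.) This is exactly the observation the author records in the paragraph right before the corollary.

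Once we know that, for $i \geq 1$, every non-vanishing $\beta_{i,\sigma}(S/I(G))$ must lie in the linear strand $|\sigma| = i+1$, the conclusion follows immediately from Proposition \ref{LinearStrand}: in this range, non-vanishing of $\beta_{i,\sigma}(S/I(G))$ is equivalent to $G_{\sigma}$ containing a complete bipartite graph as a spanning subgraph.

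There is no real obstacle here — the argument is simply the amalgamation of two already-established facts. The only points worth being explicit about in the write-up are (i) the translation of the linear-resolution condition between $I(G)$ and $S/I(G)$, and (ii) the reduction to squarefree multidegrees via Hochster's formula, both of which are routine. For the sake of a self-contained presentation, one may even observe that the ``if'' direction is a special case of Lemma \ref{CompleteBipartite} (with $\sigma$ in place of $V$), giving a direct non-vanishing statement even without appealing to Fröberg's theorem; Fröberg is needed only for the ``only if'' direction, to rule out non-vanishing outside the linear strand.
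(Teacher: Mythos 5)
Your proposal is correct and is essentially identical to the paper's own argument: the author likewise invokes Fr\"{o}berg's theorem to conclude that $\beta_{i,\sigma}(S/I(G))$ vanishes for $i\geq 1$ and $|\sigma|\geq i+2$, and then reads off the remaining (linear-strand) case from Proposition~\ref{LinearStrand}. Your extra observation that the ``if'' direction also follows directly from Lemma~\ref{CompleteBipartite} is accurate but not needed.
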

This corollary implies that
the condition in Theorem \ref{MainIntro} gives a 
necessary and sufficient one for non-vanishing of graded 
Betti numbers of edge ideals of co-chordal graphs. 

\par
Since Corollary \ref{co-chordalBetti} is a characterization 
of non-vanishing of Betti numbers of 
the edge ideal of a co-chordal graph, 
we can characterize the regularity 
and the projective dimension of such an edge ideal, 
though the result on the regularity is trivial. 
\begin{corollary}
  \label{co-chordalProj}
  Let $G$ be a co-chordal graph. 
  Then $\reg (S/I(G)) = a(G) = 1$ and 
  \begin{displaymath}
      \pd (S/I(G)) = \max \{ m+n-1 \; : \; 
        \text{$G$ contains complete bipartite subgraph of type $(m,n)$} \}. 
  \end{displaymath}
\end{corollary}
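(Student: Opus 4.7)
The plan is to deduce both claims directly from Corollary \ref{co-chordalBetti} together with Fröberg's theorem already invoked above; no substantial new argument is required.

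For the regularity, since $G$ is co-chordal, Fröberg's theorem guarantees that $I(G)$ has a linear resolution. Because $I(G)$ is generated in degree $2$, this forces $\reg(S/I(G)) = 1$. Assuming $G$ contains at least one edge (otherwise the statement is vacuous), a single edge is trivially a pairwise $3$-disjoint set, so $a(G) \geq 1$; combined with Katzman's inequality $\reg(S/I(G)) \geq a(G)$ from Theorem \ref{reg>=a(G)}, we conclude $a(G) = 1 = \reg(S/I(G))$.

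For the projective dimension, by definition
\begin{displaymath}
  \pd(S/I(G)) = \max \{ i \; : \; \beta_{i,\sigma}(S/I(G)) \neq 0 \text{ for some } \sigma \subset V \}.
\end{displaymath}
Corollary \ref{co-chordalBetti} tells us that for $i \geq 1$, a nonvanishing $\beta_{i,\sigma}$ forces $|\sigma| = i+1$ and requires $G_\sigma$ to contain a complete bipartite graph as a spanning subgraph. Writing such a spanning complete bipartite subgraph as having type $(m,n)$ yields $i = m + n - 1$. The final step is to note that $G_\sigma$ contains a complete bipartite subgraph of type $(m,n)$ as a spanning subgraph if and only if $G$ itself contains a complete bipartite subgraph of type $(m,n)$ (on the vertex set $\sigma$), so the two maxima coincide and the stated formula follows.

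There is no real obstacle; the only bookkeeping point is the trivial endpoint $i = 0$, handled by noting that once $G$ has any edge at all it contains $K_{1,1}$, so the maximum over $m+n-1$ is achieved by some $i \geq 1$ and dominates the trivial value $0$. Hence the corollary is essentially a direct translation of Corollary \ref{co-chordalBetti} into the language of regularity and projective dimension.
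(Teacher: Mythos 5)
Your proposal is correct and follows exactly the route the paper intends: the paper states this corollary without a written proof, treating it as an immediate consequence of Corollary \ref{co-chordalBetti} (for the projective dimension formula) and of Fr\"oberg's linear-resolution criterion together with Theorem \ref{reg>=a(G)} (for the regularity claim). Your handling of the edge cases (the $i=0$ endpoint and the assumption that $G$ has an edge) is fine and adds nothing beyond what the paper implicitly assumes.
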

\begin{remark}
  The graded Betti numbers of co-chordal graph had been studied 
  by Dochtermann and Engstr\"{o}m \cite[Section 3.1]{DE}. 
  The statement about the projective dimension 
  of Corollary \ref{co-chordalProj} 
  is a rephrasing of \cite[Corollary 3.3]{DE}. 

  \par
  Also Chen \cite{Chen} constructed a minimal free resolution for 
  the edge ideal of a co-chordal graph (see also \cite{Horwitz}) and 
  gave formulas for ($\mathbb{N}$)-graded Betti numbers 
  and the projective dimension of such an ideal (\cite[Corollary 5.2]{Chen}), 
  which are different from ours. 
\end{remark}

Eliahou and Villarreal \cite{EV} conjectured that 
when $G$ is a co-chordal graph, 
the projective dimension $\pd (S/I(G))$ is equal to the maximum degree of 
vertices of $G$ (see \cite[Conjecture 4.13]{GV05}). 
Although this conjecture is true for some classes of graphs 
(e.g., \cite[Corollary 4.12]{GV05}, \cite[Theorem 2.13]{MK}), 
in general, it is not true; see \cite{MK}. 
Actually, considering the maximum degree of vertices of $G$ 
is equal to considering the maximum cardinality of the vertex set of bouquets 
those are subgraph of $G$. 
Corollary \ref{co-chordalProj} means that we need to consider 
arbitrary complete bipartite subgraphs 
though a bouquet is a complete bipartite graphs of type $(1,n)$. 

\par
The following examples are graphs 
whose edge ideals have $2$-linear resolutions 
for which we can adapt Corollary \ref{co-chordalProj} 
(the $4$-cycle $C_4$ is noted in \cite{MK}). 
These are also of examples showing that 
we should apply 
Theorem \ref{MainIntro} instead of \cite[Theorem 3.1]{Kimura} \
to know non-vanishing of 
Betti numbers of the edge ideals. 
\begin{example}
  \label{C4}
  Let us consider $4$-cycle $C_4$: 
  \begin{center}
    \begin{picture}(120,60)(-50,0)
      \put(-30,50){$C_4$:}
      \put(10,10){\circle*{4}}
      \put(45,45){\circle*{4}}
      \put(10,45){\circle*{4}}
      \put(45,10){\circle*{4}}
      \put(-5,0){$x_2$}
      \put(50,45){$x_4$}
      \put(-5,45){$x_1$}
      \put(50,0){$x_3$}
      \put(10,10){\line(0,1){35}}
      \put(10,10){\line(1,0){35}}
      \put(45,45){\line(0,-1){35}}
      \put(45,45){\line(-1,0){35}}
    \end{picture}
  \end{center}
  Then the ($\mathbb{N}$-graded) Betti diagram of $S/I(C_4)$ is 
  \begin{center}
  \begin{tabular}{c|rrrr}
    $j \backslash i$ & $0$ & $1$ & $2$ & $3$ \\ \hline
    $0$ & $1$ & & & \\
    $1$ & & $4$ & $4$ & $1$
  \end{tabular}
  \end{center}
  where the entry of the $j$th row and $i$th column stands for 
  $\beta_{i,i+j} (S/I(C_4))$. 

  \par
  We focus on $\beta_{3,4} (S/I(C_4))$. 
  Since $C_4$ does not have a complete bipartite subgraph of type $(1,3)$ 
  (bouquet), 
  we cannot obtain 
  $\beta_{3,4} (S/I(C_4)) \neq 0$ from \cite[Theorem 3.1]{Kimura}. 
  On the other hand, $C_4$ has a complete bipartite subgraph 
  of type $(2,2)$, 
  we have $\beta_{3,4} (S/I(C_4)) \neq 0$ by Theorem \ref{MainIntro}. 
  Also, $\pd (S/I(C_4)) = 3 = 2+2-1$. 
\end{example}

\begin{example}
  \label{FerrersGraph}
  Next we consider the following graph $G$: 
  \begin{center}
  \begin{picture}(170,80)(-20,0)
    \put(-10,70){$G$:}
    \put(50,60){\circle*{4}}
    \put(100,60){\circle*{4}}
    \put(50,10){\circle*{4}}
    \put(100,10){\circle*{4}}
    \put(25,35){\circle*{4}}
    \put(125,35){\circle*{4}}
    \put(40,65){$x_2$}
    \put(100,65){$x_4$}
    \put(40,0){$x_3$}
    \put(100,0){$x_5$}
    \put(8,35){$x_1$}
    \put(130,35){$x_6$}
    \put(25,35){\line(1,1){25}}
    \put(25,35){\line(1,-1){25}}
    \put(125,35){\line(-1,1){25}}
    \put(125,35){\line(-1,-1){25}}
    \put(50,60){\line(1,0){50}}
    \put(50,60){\line(1,-1){50}}
    \put(50,10){\line(1,0){50}}
    \put(50,10){\line(1,1){50}}
  \end{picture}
  \end{center}
  Then the ($\mathbb{N}$-graded) Betti diagram of $S/I(G)$ is 
  \begin{center}
  \begin{tabular}{c|rrrrr}
    $j \backslash i$ & $0$ & $1$ & $2$ & $3$ & $4$ \\ \hline
    $0$ & $1$ & & & & \\
    $1$ & & $8$ & $14$ & $9$ & $2$ 
  \end{tabular}
  \end{center}
  We focus on $\beta_{4,5} (S/I(G))$. 
  Since $G$ does not have a complete bipartite subgraph of type $(1,4)$ 
  (bouquet), we cannot obtain 
  $\beta_{4,5} (S/I(G)) \neq 0$ from \cite[Theorem 3.1]{Kimura}. 
  On the other hand, $G$ has a complete bipartite subgraph of type $(2,3)$, 
  we have $\beta_{4,5} (S/I(G)) \neq 0$ by Theorem \ref{MainIntro}. 

  \par
  Actually the induced subgraphs of $G$ on both 
  $\{ x_1, x_2, x_3, x_4, x_5 \}$ and 
  $\{ x_2, x_3, x_4, x_5, x_6 \}$ are complete bipartite graph of type $(2,3)$. 

  \par
  Also, $\pd (S/I(G)) = 4 = 2+3-1$. 
\end{example}

The graphs in Examples \ref{C4} and \ref{FerrersGraph} 
are Ferrers graphs with shape $(2,2)$ and $(3,3,2)$, respectively. 
Ferrers graphs lie in the class of co-chordal graphs. 
Corso and Nagel \cite[Theorem 2.1]{CN} gave an exact formula for 
the ${\mathbb{N}}$-graded Betti numbers of edge ideals 
of Ferrers graphs. 
Later Dochtermann and Engstr\"{o}m \cite{DE} 
provided a combinatorial meaning of these Betti numbers, 
which fits our point of view. 
\begin{theorem}[{\cite[Theorem 3.8]{DE}}]
  Let $i \geq 1$ be an integer. 
  Then the graded Betti number $\beta_{i, \sigma} (S/I(G_{\lambda}))$ 
  of a Ferrers graph $G_{\lambda}$ is $0$ unless $|{\sigma}| = i+1$ 
  and $(G_{\lambda})_{\sigma}$ is a complete bipartite graph, 
  in which case $\beta_{i, \sigma} (S/I(G_{\lambda})) = 1$.  
  In particular, $\beta_{i, i+1} (S/I(G_{\lambda}))$ is equal to 
  the number of induced complete bipartite 
  subgraphs of $G_{\lambda}$ which consist of $i+1$ vertices. 
\end{theorem}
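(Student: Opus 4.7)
The plan is to derive the theorem from Corollary \ref{co-chordalBetti} and Proposition \ref{LinearStrand}, together with the basic fact that every Ferrers graph $G_{\lambda}$ is bipartite and co-chordal. First, since $G_{\lambda}$ is co-chordal, Fr\"oberg's theorem gives that $I(G_{\lambda})$ has a linear resolution, so $\beta_{i,\sigma}(S/I(G_{\lambda}))=0$ unless $|\sigma|=i+1$. Thus only the case $|\sigma|=i+1$ is at issue, and Proposition \ref{LinearStrand} reduces the computation of $\beta_{i,\sigma}(S/I(G_{\lambda}))$ to the determination of $c((G_{\lambda})_{\sigma})$.

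Next I would bound $c((G_{\lambda})_{\sigma})$ using bipartiteness. Because $G_{\lambda}$ is bipartite, so is every induced subgraph $(G_{\lambda})_{\sigma}$, and hence $(G_{\lambda})_{\sigma}$ is triangle-free. Any complete $c$-partite graph with $c\geq 3$ contains a triangle (taking one vertex from each of three distinct parts), so no complete $c$-partite graph with $c\geq 3$ can sit inside $(G_{\lambda})_{\sigma}$ as a spanning subgraph. Therefore $c((G_{\lambda})_{\sigma})\leq 2$ and, by Proposition \ref{LinearStrand}, $\beta_{i,\sigma}(S/I(G_{\lambda}))\in\{0,1\}$.

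It remains to identify when the value is $1$, and here is the only place where a small argument is needed. Write $V(G_{\lambda})=X\sqcup Y$ for the canonical bipartition of the Ferrers graph. I claim that $(G_{\lambda})_{\sigma}$ contains a complete bipartite spanning subgraph if and only if $(G_{\lambda})_{\sigma}$ itself is complete bipartite. One direction is trivial; for the other, suppose $(G_{\lambda})_{\sigma}$ has a complete bipartite spanning subgraph with parts $\sigma=\sigma_{1}\sqcup\sigma_{2}$. If some $\sigma_{i}$ contained both an $x\in X\cap\sigma$ and a $y\in Y\cap\sigma$, then any $v\in\sigma_{j}$ with $j\neq i$ would have to be joined to both $x$ and $y$ in $(G_{\lambda})_{\sigma}$, forcing $v\in Y$ and $v\in X$ simultaneously, which is impossible. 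Hence each $\sigma_{i}$ is contained entirely in $X$ or in $Y$, and since $(G_{\lambda})_{\sigma}$ is bipartite (so edges only go between $X$ and $Y$) the pair $\{\sigma_{1},\sigma_{2}\}$ must coincide with $\{X\cap\sigma,Y\cap\sigma\}$. But then every edge between $X\cap\sigma$ and $Y\cap\sigma$ is present, so $(G_{\lambda})_{\sigma}$ is complete bipartite. Summing $\beta_{i,\sigma}$ over all $\sigma$ with $|\sigma|=i+1$ gives the final counting statement for $\beta_{i,i+1}(S/I(G_{\lambda}))$. The argument is essentially routine; the only mild subtlety is the last bipartition-preservation argument, which is easily handled as above.
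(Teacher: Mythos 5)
Your argument is correct, but it is worth being clear about what it is being compared to: the paper offers no proof of this statement at all --- it is quoted verbatim from Dochtermann and Engstr\"{o}m \cite[Theorem 3.8]{DE}, whose own proof goes through the combinatorial topology of independence complexes (and, earlier, the explicit resolution of Corso and Nagel \cite{CN}). What you have done is give a short self-contained derivation from the paper's \emph{own} results, namely Corollary \ref{co-chordalBetti} (via Fr\"{o}berg's theorem, killing everything outside the linear strand) and Proposition \ref{LinearStrand} (identifying $\beta_{i,\sigma}$ with $c((G_{\lambda})_{\sigma})-1$), plus two elementary observations: a bipartite graph is triangle-free, so $c((G_{\lambda})_{\sigma})\leq 2$; and a spanning complete bipartite subgraph of a bipartite graph must have its two parts equal to $X\cap\sigma$ and $Y\cap\sigma$, forcing $(G_{\lambda})_{\sigma}$ itself to be complete bipartite. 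This is exactly the content of the sentence the author places immediately after the theorem (``Ferrers graphs $G_{\lambda}$ are co-chordal graphs with $c((G_{\lambda})_{\sigma})$ either $1$ or $2$''), so your proof turns the paper's a posteriori remark into an independent derivation --- arguably more elementary and more general, since it applies verbatim to any bipartite co-chordal graph, not just Ferrers graphs. The only point to make explicit is that in your bipartition-rigidity step both parts $\sigma_{1},\sigma_{2}$ are nonempty; this is automatic because $c((G_{\lambda})_{\sigma})\geq 2$ means a genuine complete bipartite (hence two nonempty parts) spanning subgraph, and it is what rules out the degenerate reading in which an edgeless induced subgraph counts as ``complete bipartite.'' With that convention fixed, the counting statement for $\beta_{i,i+1}$ follows by summing over $\sigma$ as you say.
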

In particular, Ferrers graphs $G_{\lambda}$ are co-chordal graphs with 
$c((G_{\lambda})_{\sigma})$ is either $1$ or $2$ 
for any subsets $\sigma \subset V$.

\section{Cohen--Macaulay bipartite graphs}
\label{sec:CMbipartite}
We continue the investigation of the converse of Theorem \ref{MainIntro}. 
Next, we focus on the class of graphs $G$ satisfying 
$\reg (S/I(G)) = a(G)$. 
Does the converse of Theorem \ref{MainIntro} hold for these graphs? 
In this section, we discuss this problem. 

\par
Note that chordal graphs including forests are such graphs; 
see Theorem \ref{reg=a(G)}. 
On $\mathbb{N}$-grading, the author \cite{Kimura} proved that
the converse of Theorem \ref{MainIntro} (precisely, \cite[Theorem 3.1]{Kimura}) 
is true for these graphs (\cite[Theorem 4.1]{Kimura}). 
(Actually this is true for ${\mathbb{N}}^N$-grading.) 
In this section we focus on Cohen--Macaulay bipartite graphs $G$, 
which also satisfy $\reg (S/I(G)) = a(G)$; see Theorem \ref{reg=a(G)}. 
As a result, we will characterize the projective dimension of Cohen--Macaulay 
bipartite graphs (Corollary \ref{CMbipartiteProj}). 

\par
We first recall the notion of Alexander duality. 
Let $S = K[x_1, \ldots, x_N]$ be a polynomial ring over a field $K$ with 
${\mathbb{N}}^N$-grading. 
Let $I$ be a squarefree monomial ideal of $S$ 
with the minimal prime decomposition 
\begin{displaymath}
  I = P_1 \cap \cdots \cap P_q, 
  \qquad P_{\ell} = (x_{i_{\ell 1}}, \ldots, x_{i_{\ell k_{\ell}}}). 
\end{displaymath}
Then the Alexander dual ideal of $I$, denoted by $I^{\ast}$ 
is the squarefree monomial ideal generated by 
\begin{displaymath}
  x_{i_{\ell 1}} \cdots x_{i_{\ell k_{\ell}}}, \qquad \ell = 1, \ldots, q. 
\end{displaymath}

\par
Bayer, Charalambous, and Popescu \cite{BCP} investigated relations 
on graded Betti numbers of a squarefree monomial ideal 
and its Alexander dual ideal. 
Let $\prec$ denote the partial order on ${\mathbb{N}}^N$: 
for $\sigma = (\sigma_{\ell}), \tau = (\tau_{\ell}) \in {\mathbb{N}}^N$ 
with $\sigma \neq \tau$, 
$\sigma \prec \tau$ if and only if 
${\sigma}_{\ell} \leq {\tau}_{\ell}$ for all $\ell = 1, \ldots, N$. 
We say that $\beta_{i, \sigma} (I)$ is \textit{extremal} if 
$\beta_{j, \tau} (I) = 0$ for all $j \geq i$ and 
$\tau \succ \sigma$ with $|\tau| -|\sigma| \geq j-i$. 
\begin{theorem}[{\cite[Theorem 2.8]{BCP}}]
  \label{extremalAdual}
  Let $I \subset S$ be a squarefree monomial ideal. 
  If $\beta_{r, \sigma} (I^{\ast})$ is extremal, 
  then 
  \begin{displaymath}
    \beta_{r, \sigma} (I^{\ast}) = \beta_{|\sigma| - r, \sigma} (S/I). 
  \end{displaymath}
\end{theorem}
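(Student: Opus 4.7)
The plan is to translate the statement into a combinatorial assertion about simplicial (co)homology via Hochster's formula, and then derive it from Alexander duality together with the extremality hypothesis. Write $I = I_{\Delta}$ for the Stanley--Reisner ideal of some simplicial complex $\Delta$ on vertex set $V$, so that $I^{\ast} = I_{\Delta^{\vee}}$, where $\Delta^{\vee} = \{F \subseteq V : V \setminus F \notin \Delta\}$ is the combinatorial Alexander dual. Using Hochster's formula (\cite[Theorem 5.5.1]{BrunsHerzog}) together with the shift $\beta_{r,\sigma}(I^{\ast}) = \beta_{r+1,\sigma}(S/I^{\ast})$, both sides of the claimed equality become reduced simplicial homologies:
\begin{displaymath}
  \beta_{r,\sigma}(I^{\ast}) = \dim_{K} \tilde{H}_{|\sigma|-r-2}((\Delta^{\vee})_{\sigma}; K), \qquad
  \beta_{|\sigma|-r,\sigma}(S/I) = \dim_{K} \tilde{H}_{r-1}(\Delta_{\sigma}; K).
\end{displaymath}
The task therefore reduces to an equality of dimensions between these two groups.

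Next, I apply the classical combinatorial Alexander duality to the restricted complex $\Delta_{\sigma}$, viewed as a complex on the vertex set $\sigma$; this yields the isomorphism
\begin{displaymath}
  \tilde{H}_{r-1}(\Delta_{\sigma}; K) \cong \tilde{H}^{|\sigma|-r-2}((\Delta_{\sigma})^{\vee}; K),
\end{displaymath}
where $(\Delta_{\sigma})^{\vee} = \{F \subseteq \sigma : \sigma \setminus F \notin \Delta\}$ is the Alexander dual taken inside $\sigma$, and over a field cohomology and homology have the same $K$-dimension. Because $\Delta$ is closed under subsets, the containment $\sigma \setminus F \subseteq V \setminus F$ yields a natural inclusion of simplicial complexes $(\Delta_{\sigma})^{\vee} \hookrightarrow (\Delta^{\vee})_{\sigma}$. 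The theorem will therefore follow once I show that, under the extremality hypothesis, this inclusion induces an isomorphism on $\tilde{H}_{|\sigma|-r-2}(-; K)$.

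To handle the inclusion, I feed it into the long exact homology sequence of the pair $((\Delta^{\vee})_{\sigma}, (\Delta_{\sigma})^{\vee})$. The extra simplices of $(\Delta^{\vee})_{\sigma}$ are precisely the faces $F \subseteq \sigma$ with $V \setminus F \notin \Delta$ but $\sigma \setminus F \in \Delta$; each such $F$ witnesses a nonface of $\Delta$ meeting $V \setminus \sigma$ nontrivially, and so is recorded by a Betti number $\beta_{j,\tau}(I^{\ast})$ at a strictly larger multidegree $\tau \succ \sigma$. Translating back through Hochster's formula, the extremality of $\beta_{r,\sigma}(I^{\ast})$ says precisely that $\tilde{H}_{k}((\Delta^{\vee})_{\tau}; K) = 0$ for all $\tau \succ \sigma$ and all $k$ in the range $|\sigma|-r-2 \leq k \leq |\tau|-r-2$. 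The hard part will be packaging these vanishings into vanishing of the relative homology of the pair in degrees $|\sigma|-r-2$ and $|\sigma|-r-1$; I plan to do this by a filtration or successive Mayer--Vietoris argument that sweeps $\tau$ upward from $\sigma$ towards $V$, using extremality as exactly the input that kills all intermediate obstructions and delivers the required isomorphism $\tilde{H}_{|\sigma|-r-2}((\Delta_{\sigma})^{\vee}; K) \cong \tilde{H}_{|\sigma|-r-2}((\Delta^{\vee})_{\sigma}; K)$.
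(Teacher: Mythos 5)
The paper offers no proof of this statement---it is imported verbatim from Bayer--Charalambous--Popescu \cite[Theorem 2.8]{BCP}---so there is no internal argument to compare against; I can only judge your proposal on its own terms, and there it is an outline with the decisive step missing. Your reductions are all correct: Hochster's formula gives $\beta_{r,\sigma}(I^{\ast})=\dim_K\tilde{H}_{|\sigma|-r-2}((\Delta^{\vee})_{\sigma};K)$ and $\beta_{|\sigma|-r,\sigma}(S/I)=\dim_K\tilde{H}_{r-1}(\Delta_{\sigma};K)$; combinatorial Alexander duality inside $\sigma$ identifies the latter with $\dim_K\tilde{H}_{|\sigma|-r-2}((\Delta_{\sigma})^{\vee};K)$; the inclusion $(\Delta_{\sigma})^{\vee}\subseteq(\Delta^{\vee})_{\sigma}$ holds; and your translation of extremality into the vanishing $\tilde{H}_{k}((\Delta^{\vee})_{\tau};K)=0$ for $\tau\succ\sigma$ and $|\sigma|-r-2\le k\le|\tau|-r-2$ is accurate. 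But the entire content of the theorem is the claim that this inclusion induces an isomorphism on $\tilde{H}_{|\sigma|-r-2}$, and at exactly that point you write only that you ``plan'' to package the vanishings via a filtration or Mayer--Vietoris sweep. No such argument is given, so the proof is not complete.

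Moreover, the plan as sketched does not obviously close. The natural complexes interpolating between $(\Delta_{\sigma})^{\vee}$ and $(\Delta^{\vee})_{\sigma}$ are $\Gamma_{\tau}:=\{F\subseteq\sigma \; : \; \tau\setminus F\notin\Delta\}=((\Delta_{\tau})^{\vee})_{\sigma}$ for $\sigma\subseteq\tau\subseteq V$, i.e.\ Alexander duals taken \emph{inside} $\tau$ and then restricted to $\sigma$. The groups that extremality controls are instead $\tilde{H}_{k}((\Delta^{\vee})_{\tau};K)$, restrictions of the \emph{global} dual to $\tau$. These are not the subquotients of your filtration, and bridging the two requires a further nontrivial duality or spectral-sequence argument---which is essentially the actual content of the BCP proof. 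Until that link is supplied, what you have is a correct bookkeeping of both sides of the equality plus an unproven claim that the two relevant homology groups agree.
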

Also, Eagon and Reiner \cite{ER} proved the following theorem. 
\begin{theorem}[{\cite{ER}}]
  \label{Adual}
  Let $I$ be a squarefree monomial ideal. Then   
  \begin{displaymath}
    \reg (I^{\ast}) = \pd (S/I), \qquad
    \pd (I^{\ast}) = \reg (S/I). 
  \end{displaymath}
\end{theorem}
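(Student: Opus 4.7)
The approach is to derive both identities from the pointwise duality of extremal Betti numbers (Theorem \ref{extremalAdual}) together with the fact that Alexander duality is an involution, $(I^*)^* = I$. It will suffice to establish the first equality $\reg(I^*) = \pd(S/I)$; the second equality $\pd(I^*) = \reg(S/I)$ then follows by applying the first with $I^*$ in place of $I$ and invoking the standard relations $\pd(I^*) = \pd(S/I^*) - 1$ and $\reg(S/I) = \reg(I) - 1$.

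To prove $\reg(I^*) \leq \pd(S/I)$, I will choose a pair $(r, \sigma)$ with $\beta_{r, \sigma}(I^*) \neq 0$ realizing $\reg(I^*) = |\sigma|-r$ and, among all such, with $r$ as large as possible. A direct check against the definition of extremality will show that $\beta_{r, \sigma}(I^*)$ is extremal: the maximality of $|\sigma|-r$ forbids any $(r',\sigma')$ with $|\sigma'|-r' > |\sigma|-r$, while the maximality of $r$ forbids any witnessing pair of equal regularity but larger homological index. Theorem \ref{extremalAdual} then yields $\beta_{|\sigma|-r, \sigma}(S/I) = \beta_{r,\sigma}(I^*) \neq 0$, so $|\sigma|-r \leq \pd(S/I)$.

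For the reverse inequality $\pd(S/I) \leq \reg(I^*)$, I will pick $(p, \tau)$ with $\beta_{p, \tau}(S/I) \neq 0$, where $p = \pd(S/I)$ and $|\tau|$ is chosen as large as possible. Using the standard shift $\beta_{p, \tau}(S/I) = \beta_{p-1, \tau}(I)$, an analogous check shows that $\beta_{p-1, \tau}(I)$ is extremal in the sense of Theorem \ref{extremalAdual}. Applying that theorem with the roles of $I$ and $I^*$ interchanged, which is legitimate because $(I^*)^* = I$, gives $\beta_{p-1, \tau}(I) = \beta_{|\tau|-p+1, \tau}(S/I^*)$, which after the same shift translates to $\beta_{|\tau|-p, \tau}(I^*) \neq 0$. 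Hence $\reg(I^*) \geq |\tau|-(|\tau|-p) = p = \pd(S/I)$.

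The one delicate point will be the verification of extremality at the chosen witnessing pairs, which relies on a careful lexicographic-style tie-breaking in the choice of $(r, \sigma)$ and of $(p, \tau)$. Once this bookkeeping is in place, the theorem reduces mechanically to two applications of Theorem \ref{extremalAdual}, one to $I$ and one to $I^*$.
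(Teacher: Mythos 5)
Your derivation is correct, but it is worth noting that the paper offers no proof of this statement at all: it is imported as a black-box citation to Eagon--Reiner (and, for the $\reg$/$\pd$ formulation, really to Terai), so there is no "paper's proof" to match. What you do instead is deduce the theorem from the Bayer--Charalambous--Popescu duality for extremal Betti numbers, i.e.\ from Theorem \ref{extremalAdual}, which the paper also quotes. This is logically sound, since that theorem is strictly stronger: your verification that a pair $(r,\sigma)$ maximizing $|\sigma|-r$ and then $r$ is extremal goes through against the paper's definition (any violating $(j,\tau)$ would satisfy $|\tau|-j=|\sigma|-r$ and $j>r$, contradicting the choice of $r$), and likewise the pair with $p=\pd(S/I)$ and $|\tau|$ maximal yields an extremal $\beta_{p-1,\tau}(I)$ because $\tau'\succ\tau$ forces $|\tau'|>|\tau|$; the symmetric application to $I^{\ast}$ via $(I^{\ast})^{\ast}=I$ and the shifts $\pd(S/I)=\pd(I)+1$, $\reg(S/I)=\reg(I)-1$ (valid for a nonzero proper ideal) are all standard. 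What your route buys is a self-contained proof within the toolkit the paper already assembles, and it makes transparent why only the \emph{extremal} corners of the Betti table matter (cf.\ Remark \ref{extremal}); what it does not reflect is the historical logic, since \cite{ER} and Terai's theorem predate \cite{BCP} and were proved by Alexander duality of simplicial complexes and Hochster's formula, with the BCP statement obtained afterwards as a refinement. So your argument is a legitimate alternative derivation rather than a reconstruction of the cited proof.
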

\begin{remark}
  \label{extremal}
  We can obtain $\reg (I^{\ast})$ and $\pd (I^{\ast})$ by 
  only seeing non-zero extremal Betti numbers $\beta_{r, \sigma} (I^{\ast})$. 
\end{remark}

\par
Let $G$ be a finite simple graph on $V$. 
By Theorem \ref{extremalAdual}, 
if $\beta_{r, \sigma} (I(G)^{\ast}) \neq 0$ is extremal, 
then we have $\beta_{|\sigma| - r, \sigma} (S/I(G)) \neq 0$. 
Thus we consider, in this situation, 
whether there exists a set of complete bipartite subgraphs of $G$ 
which guarantees the non-vanishing of 
$\beta_{|\sigma| - r, \sigma} (S/I(G))$ 
via Theorem \ref{MainIntro}. 
Note that when this is positive, we can characterize 
the regularity and the projective dimension in terms of Theorem \ref{MainIntro} 
by Theorem \ref{Adual} and Remark \ref{extremal}. 

\par
In general, we do not know the answer. 
But when $G$ is a Cohen--Macaulay bipartite graph, 
the answer is yes. 
Before stating the result, we give one definition. 

\par
Let $G$ be a finite simple graph. 
Let $\mathcal{B} = \{ B_1, \ldots, B_r \}$ 
be a set of complete bipartite subgraphs of $G$. 
We set 
\begin{displaymath}
  V({\mathcal{B}}) := V(B_1) \cup \cdots \cup V(B_r). 
\end{displaymath}
We say $\mathcal{B}$ is 
a \textit{pairwise $3$-disjoint set of complete bipartite subgraphs} of $G$ 
if $\mathcal{B}$ satisfies the condition (1) and (2) 
of Theorem \ref{MainIntro}. 
\begin{proposition}
  \label{extrem-CMbipartite}
  Let $G$ be a Cohen--Macaulay bipartite graph. 
  If $\beta_{r, \sigma} (I(G)^{\ast}) \neq 0$ is extremal, 
  then there exists a pairwise $3$-disjoint set 
  $\mathcal{B} = \{ B_1, \ldots, B_r\}$ of 
  complete bipartite subgraphs of $G$ with $\# V({\mathcal{B}}) = \sigma$. 
\end{proposition}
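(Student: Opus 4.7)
The plan combines Alexander duality, the Eagon--Reiner theorem, and the Herzog--Hibi structure of Cohen--Macaulay bipartite graphs. By Theorem~\ref{extremalAdual}, the hypothesis yields $\beta_{|\sigma|-r,\sigma}(S/I(G))\neq 0$. Since $G$ is Cohen--Macaulay, the Eagon--Reiner theorem implies that $I(G)^{\ast}$ has a linear resolution, generated in the single degree $n:=|V|/2$, so that $|\sigma|=r+n$, and the extremality of $\beta_{r,\sigma}(I(G)^{\ast})$ becomes the statement that $\sigma$ is maximal with respect to $\prec$ among the degrees supporting non-zero Betti numbers of $I(G)^{\ast}$.

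By the Herzog--Hibi structure theorem for Cohen--Macaulay bipartite graphs, one may label the two sides of the bipartition as $V_1=\{x_1,\ldots,x_n\}$ and $V_2=\{y_1,\ldots,y_n\}$ so that $\{x_i,y_i\}\in E(G)$ for every $i$ and the relation $i\leq_P j\iff\{x_i,y_j\}\in E(G)$ defines a partial order $P$ on $[n]$. The proof then proceeds by induction on $n$. I would pick $i\in[n]$ maximal in $P$, so that $x_i$ has degree $1$ in $G$ with unique neighbor $y_i$. If $x_i\notin\sigma$, the Betti number in degree $\sigma$ equals that of the smaller Cohen--Macaulay bipartite graph obtained by deleting $\{x_i,y_i\}$ and applying the inherited Herzog--Hibi labeling to $[n]\setminus\{i\}$; the induction hypothesis then produces $\mathcal{B}$, and symmetrically when $y_i\notin\sigma$.

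Suppose instead that $x_i\in\sigma$; then necessarily $y_i\in\sigma$, since otherwise $x_i$ is isolated in $G_{\sigma}$ and Hochster's formula forces $\beta_{\cdot,\sigma}(S/I(G))=0$ by a cone argument. I would then take $B_r$ to be the complete bipartite subgraph of $G$ with parts $\{x_j\in\sigma:j\leq_P i\}$ and $\{y_i\}$, a bouquet containing the edge $e_r=\{x_i,y_i\}$. Because $i$ is maximal in $P$ and $x_i$ has degree $1$ in $G$, the neighborhood of $V(B_r)$ in $V\setminus V(B_r)$ is empty, so $e_r$ is $3$-disjoint in $G$ from every edge not meeting $V(B_r)$. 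Setting $\sigma':=\sigma\setminus V(B_r)$, and letting $G'$ denote the Cohen--Macaulay bipartite subgraph on $\{x_j,y_j:j\not\leq_P i\}$ associated to the restricted poset $P|_{[n]\setminus\{j:j\leq_P i\}}$, I would deduce from the extremality in $G$ a non-zero extremal Betti number $\beta_{r-1,\sigma'}(I(G')^{\ast})$, whence the inductive hypothesis gives pairwise $3$-disjoint complete bipartite subgraphs $B_1,\ldots,B_{r-1}$ of $G'\subseteq G$ with vertex union $\sigma'$; the family $\{B_1,\ldots,B_r\}$ is then the desired one.

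The main obstacle is justifying the descent of both non-vanishing and extremality from $(G,\sigma)$ to $(G',\sigma')$. This will require a careful Hochster/mapping-cone analysis at the vertex $x_i$, using that $\deg(x_i)=1$ and $i$ is maximal in $P$ so as to match the relevant part of the Betti table of $I(G)^{\ast}$ with that of $I(G')^{\ast}$, and the rigidity of the Herzog--Hibi labeling should supply the combinatorial compatibility needed for an extremal degree in $G$ to restrict to an extremal degree in the smaller graph.
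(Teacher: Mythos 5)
Your reduction in the case $x_i\notin\sigma$ is where the argument breaks. For a Cohen--Macaulay bipartite graph the Herzog--Hibi resolution of $I(G)^{\ast}=H_G$ has basis elements $e(\mathcal{I},T)$ of degree $U_{\mathcal{I}}=\prod_{p\in\mathcal{I}}x_p\prod_{p\notin\mathcal{I}}y_p\prod_{p\in\mathcal{I}\cap T}y_p$, so every degree $\sigma$ supporting a non-zero Betti number of $I(G)^{\ast}$ contains at least one of $x_p,y_p$ for \emph{every} $p$. Hence $x_i\notin\sigma$ forces $y_i\in\sigma$, and the graph obtained by deleting the pair $\{x_i,y_i\}$ does not carry a Betti number ``in degree $\sigma$'' at all; if instead you recurse on $\sigma\setminus\{y_i\}$, the vertex $y_i$ is permanently lost and you can never achieve $V(\mathcal{B})=\sigma$. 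A concrete instance: take $P$ the chain $p_1<p_2<p_3$, $\mathcal{I}=\{p_1,p_2\}$, $T=\{p_2,p_3\}$, giving the extremal Betti number $\beta_{1,\sigma}(I(G)^{\ast})$ with $\sigma=\{x_1,x_2,y_2,y_3\}$ and $r=1$. Here $G_{\sigma}$ is a $K_{2,2}$ with no vertex of degree $3$, so the unique admissible $\mathcal{B}$ is the single complete bipartite subgraph of type $(2,2)$ -- no bouquet can cover $\sigma$. Your induction, whose only building blocks are the bouquets $\{x_j:j\le_P i\}\cup\{y_i\}$ (and which in this example immediately hits the bad case $x_3\notin\sigma$, $y_3\in\sigma$), cannot produce this witness. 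The vertices $y_{\ell}$ with $p_{\ell}\notin\mathcal{I}$ are exactly what make the subgraphs genuinely non-bipartite-of-type-$(m,1)$, and they must be attached to the pieces rather than discarded. A secondary issue is that the claim that $V(B_r)$ has empty neighborhood in $V\setminus V(B_r)$ is false (a vertex $x_j$ with $j<_P i$ is adjacent to $y_k$ for every $k\ge_P j$), and the descent of extremality to $(G',\sigma')$, which your induction needs, is only asserted.

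For contrast, the paper avoids induction entirely: from the extremal Betti number it takes the basis element $e(\mathcal{I},T)$ with $\mathcal{I}\cap T=M(\mathcal{I})=\{p_{\ell_1},\dots,p_{\ell_r}\}$ (using the correspondence with maximal Boolean sublattices of $\mathcal{L}_G$), observes that this antichain yields the pairwise $3$-disjoint edges $\{x_{\ell_k},y_{\ell_k}\}$, and then greedily partitions the vertices dividing $U_{\mathcal{I}}$ into sets $V_1,\dots,V_r$ according to adjacency with $x_{\ell_k}$ or $y_{\ell_k}$; condition (CM3) (transitivity of the poset) is what makes each $G_{V_k}$ a \emph{complete} bipartite subgraph, and maximality of the Boolean sublattice is what shows the $V_k$ exhaust $\sigma$. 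If you want to salvage your approach, you would have to replace the bouquet $B_r$ by the full set of vertices of $\sigma$ adjacent to $x_{\ell_r}$ or $y_{\ell_r}$ -- at which point you have essentially reconstructed the paper's direct argument.
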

In particular, we can recover the following result for the regularity. 
\begin{corollary}[{\cite[Theorem 1.1]{Kummini}; see also \cite[Corollary 3.10]{FHVT}}]
  \label{regualrityCMbipartite}
  Let $G$ be a Cohen--Macaulay bipartite graph. 
  Then $\reg (S/I(G)) = a(G)$. 
\end{corollary}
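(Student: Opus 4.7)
The plan is to combine the already-known lower bound $\reg(S/I(G)) \geq a(G)$ from Theorem \ref{reg>=a(G)} with a matching upper bound that comes out of Proposition \ref{extrem-CMbipartite} via Alexander duality. Since the lower bound is free of charge, the entire content of the corollary is to produce $a(G) \geq \reg(S/I(G))$, and this is where Proposition \ref{extrem-CMbipartite} will be applied.

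First I would invoke the Eagon--Reiner theorem (Theorem \ref{Adual}) to rewrite
\[
\reg(S/I(G)) = \pd(I(G)^{\ast}).
\]
Next, by Remark \ref{extremal}, the projective dimension of $I(G)^{\ast}$ is attained by some nonzero extremal Betti number: there exist an integer $r$ and a squarefree multidegree $\sigma$ with $\beta_{r,\sigma}(I(G)^{\ast}) \neq 0$ extremal and $r = \pd(I(G)^{\ast}) = \reg(S/I(G))$. (Concretely, among all $(i,\tau)$ with $\beta_{i,\tau}(I(G)^{\ast}) \neq 0$ one chooses an $i$ maximizing the homological degree; any such pair is automatically extremal, since increasing $i$ by any amount would by definition force $\beta_{i,\tau} = 0$.)

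At this point the hypothesis of Proposition \ref{extrem-CMbipartite} is exactly met: $G$ is a Cohen--Macaulay bipartite graph and $\beta_{r,\sigma}(I(G)^{\ast})$ is a nonzero extremal Betti number. Applying the proposition yields a pairwise $3$-disjoint set of complete bipartite subgraphs $\mathcal{B} = \{B_1,\ldots,B_r\}$ of $G$. By condition (2) in the definition of pairwise $3$-disjoint (i.e., in Theorem \ref{MainIntro}), there exist edges $e_1,\ldots,e_r$ with $e_k \in E(B_k)$ that are pairwise $3$-disjoint in $G$. Hence $a(G) \geq r = \reg(S/I(G))$, which combined with Theorem \ref{reg>=a(G)} gives the desired equality.

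The argument is essentially a bookkeeping assembly of earlier results, so no step is a serious obstacle; the only point that needs a moment's care is the passage from ``$r = \pd(I(G)^{\ast})$'' to ``there is an \emph{extremal} Betti number in homological degree $r$,'' but this is immediate from the definition of extremality (as noted in Remark \ref{extremal}) and does not require Cohen--Macaulayness. All of the structural work in the corollary is pushed into Proposition \ref{extrem-CMbipartite}, whose role is precisely to convert the extremal Betti data of $I(G)^{\ast}$ into a combinatorial $3$-disjoint configuration of complete bipartite subgraphs inside $G$.
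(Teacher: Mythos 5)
Your proof is correct and follows essentially the same route the paper intends: Eagon--Reiner (Theorem \ref{Adual}) plus Remark \ref{extremal} reduce the upper bound to Proposition \ref{extrem-CMbipartite}, whose output of $r$ pairwise $3$-disjoint edges gives $a(G) \geq \reg(S/I(G))$, while Theorem \ref{reg>=a(G)} supplies the reverse inequality. (One tiny imprecision: maximizing the homological degree $i$ alone does not make $\beta_{i,\sigma}(I(G)^{\ast})$ extremal, since the definition also requires $\beta_{i,\tau}=0$ for $\tau \succ \sigma$; you must additionally take $\sigma$ maximal under $\prec$ among the nonzero Betti numbers in degree $i$, but such a choice always exists, so nothing breaks.)
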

Also we can characterize the projective dimension 
of a Cohen--Macaulay bipartite graph. 

\begin{corollary}
  \label{CMbipartiteProj}
  Let $G$ be a Cohen--Macaulay bipartite graph. 
  Then 
  \begin{displaymath}
    \pd (S/I(G)) = \max \left\{ \# V ({\mathcal{B}}) - r \; : \; 
    \begin{aligned}
      &\text{$\mathcal{B} = \{ B_1, \ldots, B_r \}$ 
            is a pairwise $3$-disjoint set} \\ 
      &\text{of complete bipartite subgraphs of $G$} 
    \end{aligned}
    \right\}. 
  \end{displaymath}
\end{corollary}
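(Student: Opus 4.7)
The plan is to establish the two inequalities separately, using Theorem~\ref{MainIntro} for the direction $\geq$ and combining Proposition~\ref{extrem-CMbipartite} with the Alexander duality material (Theorem~\ref{Adual} together with Remark~\ref{extremal}) for the direction $\leq$.

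For the lower bound, I would take any pairwise $3$-disjoint set $\mathcal{B} = \{B_{1}, \ldots, B_{r}\}$ of complete bipartite subgraphs of $G$ and invoke Theorem~\ref{MainIntro} directly: it yields
\begin{displaymath}
  \beta_{|V(\mathcal{B})| - r,\, V(\mathcal{B})}(S/I(G)) \neq 0,
\end{displaymath}
so $\pd(S/I(G)) \geq |V(\mathcal{B})| - r$. Taking the supremum over all such $\mathcal{B}$ gives the $\geq$ direction. This step is immediate and does not require the Cohen--Macaulay bipartite hypothesis.

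For the upper bound, I would pass to the Alexander dual. By Theorem~\ref{Adual}, $\pd(S/I(G)) = \reg(I(G)^{\ast})$, and Remark~\ref{extremal} guarantees that $\reg(I(G)^{\ast})$ is already realized on extremal Betti numbers, i.e., there exist $r$ and $\sigma$ such that $\beta_{r, \sigma}(I(G)^{\ast}) \neq 0$ is extremal and $|\sigma| - r = \reg(I(G)^{\ast})$. Because $G$ is Cohen--Macaulay bipartite, Proposition~\ref{extrem-CMbipartite} then supplies a pairwise $3$-disjoint set $\mathcal{B} = \{B_{1}, \ldots, B_{r}\}$ of complete bipartite subgraphs of $G$ with $V(\mathcal{B}) = \sigma$. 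Consequently $|V(\mathcal{B})| - r = |\sigma| - r = \pd(S/I(G))$, so the maximum on the right-hand side of the claimed formula is at least $\pd(S/I(G))$, which combined with the lower bound finishes the argument.

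Since all the genuine combinatorial content has been packaged into Proposition~\ref{extrem-CMbipartite}, I do not expect any real obstacle at the level of Corollary~\ref{CMbipartiteProj} itself; the proof is essentially a bookkeeping exercise that threads Theorem~\ref{MainIntro}, Theorem~\ref{Adual}, and Remark~\ref{extremal} through the key input from Proposition~\ref{extrem-CMbipartite}. The only point meriting a careful sentence is that the regularity of $I(G)^{\ast}$ is detected by extremal Betti numbers, which is precisely what Remark~\ref{extremal} provides.
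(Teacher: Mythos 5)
Your proposal is correct and follows exactly the route the paper intends: the lower bound from Theorem~\ref{MainIntro}, and the upper bound by combining Theorem~\ref{Adual} and Remark~\ref{extremal} (regularity of $I(G)^{\ast}$ is attained at an extremal Betti number) with Proposition~\ref{extrem-CMbipartite}, which converts that extremal Betti number into a pairwise $3$-disjoint set of complete bipartite subgraphs. The paper leaves this corollary without an explicit written proof precisely because, as you note, all the content sits in Proposition~\ref{extrem-CMbipartite}, so nothing further is needed.
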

\begin{remark}
  Dao and Schweig \cite[Corollary 5.6 and Remark 6.7]{DS} gave
  a formula for the projective dimension of the edge ideals of 
  sequentially Cohen--Macaulay graphs, which  
  include Cohen--Macaulay bipartite graphs. 
  But their formula is different from our characterization. 
\end{remark}

\par
To prove Proposition \ref{extrem-CMbipartite}, 
we first recall the results by Herzog and Hibi \cite{HH}, 
the structure of a Cohen--Macaulay bipartite 
graph and its graded minimal free resolution. 

\begin{theorem}[{Herzog and Hibi \cite{HH}}]
  \label{CMbipartite}
  Let $G$ be a bipartite graph on the vertex set 
  $V$ with a bipartition 
  $V = \{ x_1, \ldots, x_m \} \cup \{ y_1, \ldots, y_n \}$ 
  without isolated vertices. 
  Then $G$ is Cohen--Macaulay if and only if $n=m$ and
  there exists a relabeling of the variables $x_1, \ldots, x_n$ 
  and $y_1, \ldots, y_n$ satisfying the following $3$ conditions: 
  \begin{enumerate}
  \item[(CM1)] $\{ x_i, y_i \} \in E(G)$ for all $i = 1, \ldots, n$. 
  \item[(CM2)] If $\{ x_i, y_j \} \in E(G)$, then $i \leq j$. 
  \item[(CM3)] If $\{ x_i, y_j \}, \{ x_j, y_k \} \in E(G)$ 
    for distinct $i,j,k$, 
    then $\{ x_i, y_k \} \in E(G)$. 
  \end{enumerate}
\end{theorem}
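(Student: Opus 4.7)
My plan is to prove the two directions separately, with the forward direction broken into three steps matching (CM1), (CM2), (CM3).

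For necessity, assume $S/I(G)$ is Cohen--Macaulay. First I would show $n = m$. Since $G$ has no isolated vertices, both bipartition classes are (possibly non-minimal) vertex covers, and the minimal primes of $I(G)$ are exactly $(W)$ for $W$ a minimal vertex cover. Cohen--Macaulayness implies $I(G)$ is unmixed, so every minimal vertex cover has the same cardinality $h$. Applying Hall's theorem together with unmixedness gives a perfect matching (a deficiency in Hall's condition would produce a minimal vertex cover of size strictly less than $\min(m,n)$, while other covers force the height up), which forces $m = n$ and, after relabeling by the matching, yields (CM1).

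Next, for (CM2), I would introduce the relation $i \to j$ whenever $\{x_i, y_j\} \in E(G)$ with $i \neq j$, and show it is acyclic. Even antisymmetry already requires the unmixed hypothesis: if both $\{x_i,y_j\}$ and $\{x_j,y_i\}$ were edges, swapping $y_i,y_j$ in one of the canonical covers would produce a smaller minimal cover, contradicting unmixedness. A similar swap argument rules out longer cycles. Once acyclicity is established, any topological sort of the resulting DAG gives a relabeling that realizes (CM2). The hardest part is (CM3): assuming $\{x_i,y_j\},\{x_j,y_k\} \in E(G)$ but $\{x_i,y_k\} \notin E(G)$ with $i,j,k$ distinct, I would construct an explicit minimal vertex cover whose size differs from $h$. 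The idea is to take the cover $\{y_1,\ldots,y_n\}$, remove $y_k$, and add both $x_i$ and $x_j$; one then verifies, using (CM2) and the failure of the chord, that after pruning to a minimal cover one obtains a cardinality different from $h$, violating unmixedness. I expect this swap-and-prune analysis to be the main technical obstacle, because ensuring the constructed set is still a vertex cover requires a careful bookkeeping of the edges forced by (CM2).

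For sufficiency, assume a labeling satisfying (CM1)--(CM3) is given. My preferred route is to exhibit a pure shelling of the independence complex $\Delta(G)$. Condition (CM3) says the relation from (CM2) is transitive on $\{1,\ldots,n\}$, hence a genuine partial order $P$; together with (CM1), the maximal independent sets of $G$ correspond bijectively to the order ideals (or antichains) of $P$, giving $\dim S/I(G) = n$ and purity of $\Delta(G)$. The distributive lattice of order ideals of $P$ then induces a lexicographic shelling on the facets of $\Delta(G)$, which gives the Cohen--Macaulay property via Reisner's criterion. An alternative plan, avoiding the shelling bookkeeping, is to induct on $n$: the vertex $x_n$ has a unique neighbor $y_n$ by (CM2), so one can perform a polarization-style deletion/link argument on $\{x_n, y_n\}$ to reduce to the Cohen--Macaulay bipartite graph on $2(n-1)$ vertices obtained from the truncated poset, and conclude by the induction hypothesis.
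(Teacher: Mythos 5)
The paper offers no proof of this statement --- it is quoted verbatim from Herzog and Hibi \cite{HH} --- so there is nothing in-text to compare against; I can only judge your sketch on its own terms, and it has one genuine gap and one fixable slip, both in the necessity direction. The gap is in your treatment of (CM2): you claim that antisymmetry of the relation $i \to j$ (and, by similar swaps, acyclicity) already follows from unmixedness. It does not. The complete bipartite graph $K_{2,2}$ (the $4$-cycle of Example \ref{C4}) is unmixed --- its only minimal vertex covers are $\{x_1,x_2\}$ and $\{y_1,y_2\}$, both of size $2$ --- yet both $\{x_1,y_2\}$ and $\{x_2,y_1\}$ are edges, so the relation has a $2$-cycle and no swap produces a cover of the wrong size. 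This is precisely the point of Villarreal's theorem recalled in Section \ref{sec:UnmixedBipartite}: unmixedness alone gives (CM1) and (CM3) (equivalently, a transitive digraph $\mathfrak{d}_G$), while acyclicity of $\mathfrak{d}_G$, i.e.\ (CM2), is exactly what separates Cohen--Macaulay from merely unmixed bipartite graphs. To establish (CM2) you must therefore use the Cohen--Macaulay hypothesis through some genuinely homological input (e.g.\ a depth or Reisner-type connectedness computation that rules out a strongly connected component of size $\geq 2$); a cardinality count of minimal covers cannot do it, and your sketch supplies nothing else.

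The slip is in (CM3): the set $(\{y_1,\ldots,y_n\}\setminus\{y_k\})\cup\{x_i,x_j\}$ is not a vertex cover, because the matching edge $\{x_k,y_k\}$ guaranteed by (CM1) is left uncovered, and pruning only shrinks a set, so no minimal cover of size $\neq n$ falls out of this construction. The standard repair is dual and clean: if the chord $\{x_i,y_k\}$ fails, then $\{x_i,y_k\}$ is an independent set; any maximal independent set $M$ containing it excludes $x_j$ (adjacent to $y_k$) and $y_j$ (adjacent to $x_i$), and by (CM1) contains at most one of $x_\ell, y_\ell$ for each $\ell$, so $\#M \leq n-1$ and its complement is a minimal vertex cover of size $\geq n+1$, contradicting unmixedness. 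Your sufficiency direction --- facets of $\Delta(G)$ indexed by the order ideals of $P_G$, shelled by any linear extension of the distributive lattice $\mathcal{L}_G$, hence pure shellable and Cohen--Macaulay --- is sound and fits naturally with the lattice-theoretic description the paper uses in Lemma \ref{Boolean}; the first part of the necessity argument ($m=n$ and a perfect matching via K\H{o}nig--Hall and unmixedness) is also fine.
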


\par
Let $G$ be a Cohen--Macaulay bipartite graph on 
$V = \{ x_1, \ldots, x_n \} \cup \{ y_1, \ldots, y_n \}$ 
satisfying (CM1), (CM2), and (CM3). 
Then we can associate the poset $P_G$ with $G$: 
the set of elements of $P_G$ is 
$\{ p_1, \ldots, p_n \}$ and $p_i \leq p_j$ if and only if 
$\{ x_i, y_j \} \in E(G)$. 
A poset ideal $\mathcal{I}$ of $P_G$ is a subset of $P_G$ 
with the property: 
if $p_{\ell} \in \mathcal{I}$, then $p_{{\ell}'} \in \mathcal{I}$ 
for all $p_{{\ell}'} \leq p_{\ell}$. 
By definition, a poset ideal is determined by the maximal elements of it. 
We denote the set of maximal elements of $\mathcal{I}$ by $M(\mathcal{I})$. 
The set of all poset ideals of $P_G$ forms a distributive lattice 
$\mathcal{L}_G$. 
We consider the squarefree monomial ideal $H_G := H_{P_G}$ 
generated by $u_{\mathcal{I}}$, $\mathcal{I} \in \mathcal{L}_G$, where
\begin{displaymath}
  u_{\mathcal{I}} = \prod_{p \in \mathcal{I}} x_p 
                    \prod_{p \in P_G \setminus \mathcal{I}} y_p. 
\end{displaymath}
The ideal $H_G$ is in fact the Alexander dual ideal of $I(G)$. 

\par
Herzog and Hibi \cite{HH} gave an explicit minimal $\mathbb{N}$-graded 
free resolution of $H_G$, 
which is also a minimal ${\mathbb{N}}^{2n}$-graded 
free resolution. The $i$th free bases are 
\begin{displaymath}
  e(\mathcal{I},T), 
\end{displaymath}
where $\mathcal{I} \in \mathcal{L}_G$ and $T \subset P_G$ 
such that 
\begin{displaymath}  
  \mathcal{I} \cap T \subset M(\mathcal{I}), \  
  \# (\mathcal{I} \cap T) = i, \  
  \mathcal{I} \cup T = \{ p_1, \ldots, p_n \}. 
\end{displaymath}
The degree of $e(\mathcal{I},T)$ is
\begin{displaymath}
  \deg e(\mathcal{I},T) = \deg U_{\mathcal{I}}, 
  \qquad U_{\mathcal{I}} 
           := u_{\mathcal{I}} \prod_{p \in \mathcal{I} \cap T} y_p. 
\end{displaymath}
A free basis $e(\mathcal{I}, T)$ corresponds to a non-zero graded Betti number. 
Moreover it is characterized in terms of $\mathcal{L}_G$. 
\begin{lemma}[{Herzog and Hibi \cite[Corollary 2.2]{HH}}]
  \label{Boolean}
  We use the same notation as above. 
  The correspondence 
  \begin{displaymath}
    e(\mathcal{I}, T) \mapsto 
    [\mathcal{I} \setminus (\mathcal{I} \cap T), \mathcal{I}]
  \end{displaymath}
  gives a bijection between the set of basis elements 
  $e(\mathcal{I}, T)$ and the set of Boolean sublattices of $\mathcal{L}_G$. 

  \par
  In particular, 
  $e(\mathcal{I}, T)$ corresponds to a non-zero extremal Betti number 
  if and only if $[\mathcal{I} \setminus (\mathcal{I} \cap T), \mathcal{I}]$ 
  is a maximal Boolean sublattice of $\mathcal{L}_G$. 
\end{lemma}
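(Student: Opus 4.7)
My plan is to recast the basis elements $e(\mathcal{I}, T)$ of the Herzog--Hibi resolution in simpler terms and then read off the bijection. First I would simplify the indexing data: the condition $\mathcal{I} \cup T = \{p_1, \ldots, p_n\}$ forces $T \supseteq P_G \setminus \mathcal{I}$, so $T$ is determined by $\mathcal{I}$ together with $S := \mathcal{I} \cap T$ through $T = (P_G \setminus \mathcal{I}) \cup S$. The remaining constraints become $S \subseteq M(\mathcal{I})$ and $|S| = i$, so in homological degree $i$ a basis element $e(\mathcal{I}, T)$ is equivalent to a pair $(\mathcal{I}, S)$ with $\mathcal{I} \in \mathcal{L}_G$ and $S \subseteq M(\mathcal{I})$ of cardinality $i$. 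Under this identification the proposed map reads $(\mathcal{I}, S) \mapsto [\mathcal{I} \setminus S, \mathcal{I}]$.

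Next I would show this map lands in Boolean sublattices and is invertible. Since $S \subseteq M(\mathcal{I})$, its elements are pairwise incomparable, and for every $S' \subseteq S$ the set $(\mathcal{I} \setminus S) \cup S'$ is still a poset ideal, because every strict predecessor of $p \in S'$ already lies in $\mathcal{I} \setminus \{p\} \supseteq \mathcal{I} \setminus S$. Hence $S' \mapsto (\mathcal{I} \setminus S) \cup S'$ is a lattice isomorphism $2^S \xrightarrow{\sim} [\mathcal{I} \setminus S, \mathcal{I}]$, exhibiting the image as a Boolean sublattice of rank $i$. Conversely, any Boolean sublattice of the finite distributive lattice $\mathcal{L}_G$ is an interval $[\mathcal{J}, \mathcal{I}]$ with $\mathcal{I} \setminus \mathcal{J}$ an antichain in $P_G$; each $p \in \mathcal{I} \setminus \mathcal{J}$ must moreover be maximal in $\mathcal{I}$, for otherwise some $q > p$ in $\mathcal{I}$ would lie either in $\mathcal{J}$ (contradicting the poset-ideal property of $\mathcal{J}$) or in $\mathcal{I} \setminus \mathcal{J}$ (breaking the antichain property). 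Putting $S := \mathcal{I} \setminus \mathcal{J}$ and $T := (P_G \setminus \mathcal{I}) \cup S$ then recovers a valid basis element, and these two constructions are mutually inverse.

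For the extremality statement I would translate the Bayer--Charalambous--Popescu criterion directly through the formula $U_{\mathcal{I}} = \prod_{p \in \mathcal{I}} x_p \prod_{p \in T} y_p$. Writing $\sigma := \deg U_{\mathcal{I}}$, $r := |S|$, and $\mathcal{J} := \mathcal{I} \setminus S$, a strict enlargement $[\mathcal{J}', \mathcal{I}'] \supsetneq [\mathcal{J}, \mathcal{I}]$ of Boolean sublattices corresponds to a basis element $(\mathcal{I}', S')$ satisfying $\mathcal{I}' \supseteq \mathcal{I}$ and $\mathcal{J}' \subseteq \mathcal{J}$. Coordinate-wise domination then gives $\sigma' := \deg U_{\mathcal{I}'} \succ \sigma$, and a direct count yields $|\sigma'| - |\sigma| = (|\mathcal{I}'| - |\mathcal{I}|) + (|\mathcal{J}| - |\mathcal{J}'|) = |S'| - |S|$, so the inequality $|\sigma'| - |\sigma| \geq |S'| - |S|$ holds with equality and witnesses non-extremality at $(\mathcal{I}, T)$. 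Conversely, a non-extremality witness $(\mathcal{I}', S')$ has $\sigma' \succ \sigma$, which forces $\mathcal{I}' \supseteq \mathcal{I}$ on the $x$-part and $\mathcal{J}' \subseteq \mathcal{J}$ on the $y$-part, producing a strictly larger Boolean sublattice.

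I expect the main obstacle to be this last step: cleanly translating coordinate-wise domination of multidegrees into the simultaneous containment statements for $\mathcal{I}'$ and $\mathcal{J}'$. The multidegree of $U_{\mathcal{I}'}$ mixes an $x$-part indexed by $\mathcal{I}'$ with a $y$-part indexed by $P_G \setminus \mathcal{J}'$, and one must verify that the two halves behave compatibly so that the key identity $|\sigma'| - |\sigma| = |S'| - |S|$ matches the extremality inequality in both directions, with equality throughout.
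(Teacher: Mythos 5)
The paper does not prove this lemma at all: it is quoted from Herzog--Hibi \cite[Corollary 2.2]{HH}, with a remark that the extremality clause is only implicit there. So your proposal is not an alternative to an argument in the paper but a self-contained reconstruction, and as such it is essentially correct. The reduction of $e(\mathcal{I},T)$ to the pair $(\mathcal{I},S)$ with $S=\mathcal{I}\cap T\subseteq M(\mathcal{I})$, the verification that $[\mathcal{I}\setminus S,\mathcal{I}]\cong 2^S$, and the key observation that $U_{\mathcal{I}}=\prod_{p\in\mathcal{I}}x_p\prod_{p\in T}y_p$ with $T=P_G\setminus(\mathcal{I}\setminus S)$, so that the squarefree multidegree $\sigma$ records exactly the pair (top of interval, complement of bottom of interval), are all right. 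This last point makes coordinatewise domination $\sigma'\succ\sigma$ literally equivalent to strict containment of intervals, and your count $|\sigma'|-|\sigma|=(|\mathcal{I}'|-|\mathcal{I}|)+(|\mathcal{J}|-|\mathcal{J}'|)=|S'|-|S|$ is exact, so the BCP extremality condition translates cleanly in both directions (using minimality of the Herzog--Hibi resolution to equate basis elements with non-zero Betti contributions). The obstacle you anticipated at the end is not actually an obstacle.

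One assertion in your converse step is false as written: it is \emph{not} true that every Boolean sublattice of a finite distributive lattice is an interval. For instance, if $P_G$ is a $3$-element antichain (so $G$ is a perfect matching and $\mathcal{L}_G\cong 2^{\{1,2,3\}}$), then $\{\emptyset,\{1\},\{2,3\},\{1,2,3\}\}$ is a sublattice isomorphic to $2^2$ but is not an interval and is not in the image of your correspondence. The lemma is only correct, and is only ever used in this paper and in \cite{HH}, with ``Boolean sublattice'' read as ``interval of $\mathcal{L}_G$ isomorphic to a Boolean lattice.'' With that reading your argument goes through, but you should then supply the one fact you elide: an interval $[\mathcal{J},\mathcal{I}]$ of $\mathcal{L}_G=J(P_G)$ is isomorphic to $J(\mathcal{I}\setminus\mathcal{J})$ (induced order), and $J(Q)$ is Boolean precisely when $Q$ is an antichain; this is what licenses your claim that $\mathcal{I}\setminus\mathcal{J}$ is an antichain, after which your maximality-in-$\mathcal{I}$ argument correctly yields $\mathcal{I}\setminus\mathcal{J}\subseteq M(\mathcal{I})$.
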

\begin{remark}
  The statement about extremal Betti numbers is implicit in \cite{HH}; 
  see also Mohammadi and Moradi \cite[Corollary 1.7]{MM}. 
\end{remark}

\par
Now we prove Proposition \ref{extrem-CMbipartite}
\begin{proof}[Proof of Proposition \ref{extrem-CMbipartite}]
  Let $G$ be a Cohen--Macaulay bipartite graph on the vertex set 
  $V = \{ x_1, \ldots, x_n \} \cup \{ y_1, \ldots, y_n \}$ with 
  the condition (CM1), (CM2), and (CM3). 
  Let $\beta_{r, \sigma} (H_G) \neq 0$ be an extremal Betti number. 
  Then $\beta_{|\sigma| - r, \sigma} (S/I) \neq 0$ and we will construct 
  a set of complete bipartite subgraphs $\{ B_1, \ldots, B_r \}$ satisfying 
  the condition (1) and (2) of Theorem \ref{MainIntro}. 

  \par
  Since $\beta_{r, \sigma} (H_G) \neq 0$ is an extremal Betti number, 
  there is the free basis $e(\mathcal{I}, T)$ 
  with $\# (\mathcal{I} \cap T) = r$ 
  and $\deg e(\mathcal{I},T) = \sigma$. Moreover, by extremality, 
  $[\mathcal{I} \setminus (\mathcal{I} \cap T), \mathcal{I}]$ is a 
  maximal Boolean sublattice of $\mathcal{L}_G$. 
  The maximality of 
  $[\mathcal{I} \setminus (\mathcal{I} \cap T), \mathcal{I}]$ implies 
  $\mathcal{I} \cap T = M(\mathcal{I})$. 
  Set $M(\mathcal{I}) = \{ p_{{\ell}_1}, \ldots, p_{{\ell}_r} \}$. 
  Since $M(\mathcal{I})$ is the set of maximal elements of $\mathcal{I}$, 
  it forms an antichain. In terms of the graph $G$, the edges 
  $\{ x_{{\ell}_1}, y_{{\ell}_1} \}, \ldots, \{ x_{{\ell}_r}, y_{{\ell}_r} \}$ 
  are pairwise $3$-disjoint in $G$. 
  We define $V_1$ to be the subset of $V$ 
  consisting of all vertex $z \in V$ which divides $U_{\mathcal{I}}$ and 
  one of $\{ z, x_{{\ell}_1} \}$, $\{ z, y_{{\ell}_1} \}$ is an edge of $G$. 
  Next we define $V_2$ to be the subset of $V \setminus V_1$  
  consisting of all vertex $z \in V \setminus V_1$ 
  which divides $U_{\mathcal{I}}$ and 
  one of $\{ z, x_{{\ell}_2} \}$, $\{ z, y_{{\ell}_2} \}$ is an edge of $G$. 
  Similarly, we define $V_3, \ldots, V_r$. 
  Note that $x_{{\ell}_k}, y_{{\ell}_k} \in V_k$ and 
  $V_1, \ldots, V_r$ are pairwise disjoint. 
  Then it is enough to show that $\sigma = V_1 \cup \cdots \cup V_r$ and 
  $G_{V_{k}}$ is a complete bipartite subgraph of $G$. 

  \par
  We first show that $\sigma = V_1 \cup \cdots \cup V_r$. 
  It is clear that 
  \begin{displaymath}
    \{ x_{{\ell}_1}, \ldots, x_{{\ell}_r},\,  y_{{\ell}_1}, \ldots, y_{{\ell}_r} \} 
    \subset V_1 \cup \cdots \cup V_r \subset \sigma. 
  \end{displaymath}
  Put ${\sigma}_0 = \sigma \setminus 
        \{ x_{{\ell}_1}, \ldots, x_{{\ell}_r},\,  
           y_{{\ell}_1}, \ldots, y_{{\ell}_r} \}$. 
  For $x_{\ell} \in {\sigma}_0$, since $p_{\ell} \in \mathcal{I}$, 
  there exists ${\ell}_0 \in \{ {\ell}_1, \ldots, {\ell}_r \}$ 
  such that $p_{\ell} \leq p_{{\ell}_0}$. 
  Then $\{ x_{\ell}, y_{{\ell}_0} \} \in E(G)$. Therefore 
  $x_{\ell} \in V_1 \cup \cdots \cup V_{{\ell}_0}$. 
  Next we consider about $y_{{\ell}'} \in {\sigma}_0$. 
  In this case $p_{{\ell}'} \in P_G \setminus \mathcal{I}$. 
  If there is no ${\ell}_0 \in \{ {\ell}_1, \ldots, {\ell}_r \}$ such that 
  $p_{{\ell}_0} \leq p_{{\ell}'}$, then 
  $M(\mathcal{I}) \cup \{ p_{{\ell}'} \} 
   = \{ p_{{\ell}_1}, \ldots, p_{{\ell}_r}, p_{{\ell}'} \}$ is an antichain. 
  Let $\mathcal{I}'$ be the poset ideal generated by 
  $M(\mathcal{I}) \cup \{ p_{{\ell}'} \}$ 
  and set $T' = T \cup \{ p_{{\ell}'} \}$. 
  Then $\mathcal{I}' \cap T' = M(\mathcal{I}')$ and it follows that 
  $e(\mathcal{I}', T')$ is a free basis. 
  Therefore $[\mathcal{I}' \setminus M(\mathcal{I}'), \mathcal{I}']$ 
  is a Boolean sublattice of $\mathcal{L}_G$ by Lemma \ref{Boolean}. 
  Since 
  \begin{displaymath}
    [\mathcal{I} \setminus M(\mathcal{I}), \mathcal{I}] \subsetneq 
    [\mathcal{I}' \setminus M(\mathcal{I}'), \mathcal{I}'], 
  \end{displaymath}
  this contradicts to the maximality of 
  $[\mathcal{I} \setminus M(\mathcal{I}), \mathcal{I}]$. 
  Therefore there exists ${\ell}_0 \in \{ {\ell}_1, \ldots, {\ell}_r \}$ 
  such that 
  $p_{{\ell}_0} \leq p_{{\ell}'}$. When this is the case, 
  $\{ x_{{\ell}_0}, y_{{\ell}'} \} \in E(G)$ as required. 

  \par
  We next show that $G_{V_{k}}$ is a complete bipartite subgraph of $G$. 
  Take $x_{\ell}, y_{{\ell}'} \in V_{k}$. 
  Then from the construction of $V_{k}$, we have 
  $\{ x_{\ell}, y_{{\ell}_{k}} \}, \{ x_{{\ell}_k}, y_{{\ell}'} \} \in E(G)$. 
  Since $G$ satisfies (CM3), it follows that $\{ x_{\ell}, y_{{\ell}'} \}$ 
  as desired. 
\end{proof}

\section{Unmixed bipartite graphs}
\label{sec:UnmixedBipartite}
In this section we consider unmixed bipartite graphs $G$. 
In \cite{Kummini}, Kummini investigated the edge ideals of such graphs. 
He constructed the acyclic reduction $\widehat{G}$, 
which is a Cohen--Macaulay bipartite graph, 
from an unmixed bipartite graph $G$ 
and describe the regularity and the projective dimension of $I(G)$ 
in terms of the Alexander dual ideal of the edge ideal of $\widehat{G}$ 
(\cite[Proposition 3.2]{Kummini}). 
In particular, he proved $\reg (S/I(G)) = a(G)$. 
In this section, we focus on the projective dimension and 
give a characterization of it
as a generalization of Corollary \ref{CMbipartiteProj} by using 
Kummini's results. 

\par
Precisely, the following theorem is the main result in this section. 
\begin{theorem}
  \label{unmixedProj}
  Let $G$ be an unmixed bipartite graph. Then 
    \begin{displaymath}
    \pd (S/I(G)) = \max \left\{ \# V ({\mathcal{B}}) - r \; : \; 
    \begin{aligned}
      &\text{$\mathcal{B} = \{ B_1, \ldots, B_r \}$ 
            is a pairwise $3$-disjoint set} \\ 
      &\text{of complete bipartite subgraphs of $G$} 
    \end{aligned}
    \right\}
  \end{displaymath}
\end{theorem}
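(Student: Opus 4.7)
The plan is to prove the equality by establishing the two inequalities separately, with the nontrivial direction routed through Kummini's acyclic reduction.

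For the inequality $\pd(S/I(G)) \geq \#V(\mathcal{B}) - r$, I would simply apply Theorem \ref{MainIntro}: any pairwise $3$-disjoint set $\mathcal{B} = \{B_1,\ldots,B_r\}$ of complete bipartite subgraphs yields $\beta_{\#V(\mathcal{B}) - r,\,V(\mathcal{B})}(S/I(G)) \neq 0$, and hence the desired lower bound on the projective dimension.

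For the reverse inequality, the strategy is to reduce to the Cohen--Macaulay case. By Kummini's construction, unmixedness of $G$ gives a unique perfect matching $\{x_i,y_i\}$ ($i=1,\ldots,n$), and the preorder $i \preceq j \Leftrightarrow \{x_i,y_j\} \in E(G)$ induces equivalence classes whose quotient is a poset; the associated bipartite graph $\widehat{G}$ is Cohen--Macaulay, and Kummini's Proposition 3.2 expresses $\pd(S/I(G))$ in terms of $\pd(\widehat{S}/I(\widehat{G}))$ together with the sizes $k_1,\ldots,k_s$ of the equivalence classes. By Corollary \ref{CMbipartiteProj}, $\pd(\widehat{S}/I(\widehat{G})) = \#V(\widehat{\mathcal{B}}) - r$ for some pairwise $3$-disjoint family $\widehat{\mathcal{B}} = \{\widehat{B}_1,\ldots,\widehat{B}_r\}$ of complete bipartite subgraphs of $\widehat{G}$.

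The central step is a lifting construction. Every vertex $\widehat{x}_i$ (resp.\ $\widehat{y}_i$) of $\widehat{G}$ represents an equivalence class whose $x$-vertices together with the matched $y$-partners span a complete bipartite subgraph $K_{k_i,k_i}$ of $G$, since within a class every cross-pair is an edge of $G$. I would lift each $\widehat{B}_k$ to a complete bipartite subgraph $B_k$ of $G$ by replacing every vertex of $\widehat{B}_k$ with its full equivalence class; completeness of $B_k$ uses the definition of $\sim$, while pairwise $3$-disjointness of $\mathcal{B}=\{B_1,\ldots,B_r\}$ in $G$ is inherited from that of $\widehat{\mathcal{B}}$ in $\widehat{G}$ because any edge of $G$ joining distinct classes forces comparability in the quotient poset.

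The main obstacle will be the combinatorial bookkeeping required to check that $\#V(\mathcal{B}) - r$ matches Kummini's formula for $\pd(S/I(G))$ exactly. I would need to verify that the equivalence classes not already appearing in $V(\widehat{\mathcal{B}})$ can be accommodated without changing $r$, either by absorbing them into existing $B_k$'s when they are comparable to a vertex of $\widehat{B}_k$, or, when this fails, by showing that the realizing family $\widehat{\mathcal{B}}$ can be chosen to cover all of $V(\widehat{G})$ (an application of Proposition \ref{extrem-CMbipartite} via an extremal Betti number of $I(\widehat{G})^{\ast}$). Once this accounting is settled, the two inequalities match and Theorem \ref{unmixedProj} follows.
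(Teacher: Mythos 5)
Your overall architecture (lower bound from Theorem \ref{MainIntro}, upper bound via Kummini's acyclic reduction, lifting complete bipartite subgraphs of $\widehat{G}$ to $G$ by blowing up each vertex to its strongly connected component) is exactly the paper's, and your lifting construction and the verification that $3$-disjointness descends along the quotient are essentially correct. The genuine gap is in which Betti number of $(I(\widehat{G}))^{\ast}$ you choose to realize by a family $\widehat{\mathcal{B}}$. Kummini's Proposition \ref{Kummini} gives $\pd(S/I(G)) = \max\{|{\sigma}^{\zeta}| - r : \beta_{r,\sigma}((I(\widehat{G}))^{\ast}) \neq 0\}$, a maximum \emph{weighted} by the class sizes ${\zeta}_a$; this maximum is in general achieved at a pair $(r,\sigma)$ different from the one achieving $\pd(\widehat{S}/I(\widehat{G})) = \max\{|\sigma| - r\}$, because large classes can make a combinatorially ``smaller'' $\sigma$ win after weighting. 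So starting from Corollary \ref{CMbipartiteProj} applied to $\widehat{G}$ and lifting the family that realizes $\pd(\widehat{S}/I(\widehat{G}))$ produces a family whose value $\# V(\mathcal{B}) - r$ may be strictly less than $\pd(S/I(G))$, and the upper-bound direction fails.

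Neither of your proposed repairs closes this. The realizing family cannot in general be chosen to cover all of $V(\widehat{G})$: already for $\widehat{G}$ the path $x_1y_1, x_1y_2, x_2y_2$ (poset a two-element chain), every maximal Boolean sublattice of $\mathcal{L}_{\widehat{G}}$ corresponds to an extremal Betti number with $|\sigma| = 3 < 4$, so some class is always omitted; and ad hoc absorption of the omitted classes into existing $B_k$'s does not reproduce the weighted count $|{\sigma}^{\zeta}|$. The correct move, and the one step of real content you are missing, is to take from the outset the pair $(r,\sigma)$ that maximizes $|{\sigma}^{\zeta}| - r$, and to show that this maximizer may be assumed to be an \emph{extremal} Betti number of $(I(\widehat{G}))^{\ast}$: if $\beta_{r,\sigma}$ is not extremal, there is $\beta_{s,\tau} \neq 0$ with $s \geq r$, $\tau \supsetneq \sigma$ and $|\tau| - |\sigma| \geq s - r$, and since each ${\zeta}_a \geq 1$ one gets $|{\tau}^{\zeta}| - s \geq |{\sigma}^{\zeta}| - r$, so the dominating Betti number does at least as well. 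Only after this replacement does Proposition \ref{extrem-CMbipartite} apply to hand you the family $\widehat{\mathcal{B}}$ with $V(\widehat{\mathcal{B}}) = \sigma$, whose lift then has $\# V(\mathcal{B}) = |{\sigma}^{\zeta}|$ as required.
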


\par
First, we recall the Kummini's idea in \cite{Kummini}. 
Let $G$ be an unmixed bipartite graph on $V$ without isolated vertices. 
Then Villarreal \cite{Villarreal07} proved that $V$ can be bipartitioned as 
$V = \{ x_1, \ldots, x_n \} \cup \{ y_1, \ldots, y_n \}$ with 
the properties (CM1) and (CM3) in Theorem \ref{CMbipartite}. 
In particular, $G$ has a perfect matching. 
With a bipartite graph with perfect matching on the vertex set 
$V = \{ x_1, \ldots, x_n \} \cup \{ y_1, \ldots, y_n \}$, 
we can associate the directed graph ${\mathfrak{d}}_G$ on the vertex set 
$[n] := \{ 1, 2, \ldots, n \}$: $ij$ ($i \neq j$) is a (directed) edge of 
${\mathfrak{d}}_G$ if and only if $\{ x_i, y_j \}$ is an edge of $G$. 
When $G$ is unmixed, it follows by (CM3) that the corresponding directed graph 
${\mathfrak{d}}_G$ is transitive, that is, both $ij$ and $jk$ are edges of 
${\mathfrak{d}}_G$, then $ik$ is also an edge of ${\mathfrak{d}}_G$. 
Moreover when $G$ is Cohen--Macaulay, it follows by (CM2) that 
the corresponding directed graph ${\mathfrak{d}}_G$ is acyclic, that is, 
there is no directed cycle. 

\par
Let $G$ be an unmixed bipartite graph and 
$\mathfrak{d} = {\mathfrak{d}}_G$ the corresponding directed graph on $[n]$. 
A pair $i, j$ of vertices of $\mathfrak{d}$ are said to be 
\textit{strongly connected} if both of $ij$ and $ji$ 
are edges of $\mathfrak{d}$. 
Then strongly connected components form a partition of the vertex set. 
Let ${\mathcal{Z}}_1, \ldots, {\mathcal{Z}}_t$ be strongly connected 
components of $\mathfrak{d}$. 
We define the directed graph $\widehat{\mathfrak{d}}$ on $[t]$ 
by setting $ab$ ($a \neq b$) is a (directed) edge of $\widehat{\mathfrak{d}}$ 
if and only if $ij$ is an edge of $\mathfrak{d}$ 
for some (all) $i \in {\mathcal{Z}}_a$ and some (all) 
$j \in {\mathcal{Z}}_b$. 
Then $\widehat{\mathfrak{d}}$ is acyclic. 
Also it is transitive since $\mathfrak{d}$ is transitive. 
Let $\widehat{G}$ is the bipartite graph on 
$\{ u_1, \ldots, u_t \} \cup \{ v_1, \ldots, v_t \}$ such that 
$\{ u_a, v_a \}$ is an edge of $\widehat{G}$ for $a = 1, \ldots, t$, and 
for $a \neq b$, 
$\{ u_a, v_b \}$ is an edge of $\widehat{G}$ if and only if 
$ab$ is an edge of $\widehat{\mathfrak{d}}$. 
Then $\widehat{G}$ is a Cohen--Macaulay bipartite graph. 
We call $\widehat{G}$ the \textit{acyclic reduction} of $G$. 
We set ${\zeta}_a = \# {\mathcal{Z}}_a$ for $a=1, \ldots, t$. 
Also for $\sigma = \prod_a u_a^{{s}_a} \prod_b v_b^{{r}_b}$, 
we set 
${\sigma}^{\zeta} = \prod_a u_a^{s_a {\zeta}_a} \prod_b v_b^{r_b {\zeta}_b}$. 

\par
Kummini \cite{Kummini} proved the following proposition. 
\begin{proposition}[{Kummini \cite[Proposition 3.2]{Kummini}}]
  \label{Kummini}
  Let $G$ be an unmixed bipartite graph. 
  Then 
  \begin{displaymath}
    \pd (S/I(G)) = \max \{ |{\sigma}^{\zeta}| - r \; : \; 
      \beta_{r, \sigma} ((I(\widehat{G}))^{\ast}) \neq 0 \}. 
  \end{displaymath}
\end{proposition}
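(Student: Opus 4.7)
The plan is to prove the two inequalities separately. The inequality $\pd(S/I(G)) \ge \#V(\mathcal{B}) - r$ for every pairwise $3$-disjoint set $\mathcal{B} = \{B_1, \ldots, B_r\}$ of complete bipartite subgraphs of $G$ is immediate from Theorem \ref{MainIntro}, which gives $\beta_{\#V(\mathcal{B})-r,\, V(\mathcal{B})}(S/I(G)) \neq 0$. The substance of the theorem lies in the reverse inequality, for which I would combine Proposition \ref{Kummini}, Proposition \ref{extrem-CMbipartite}, and a lifting argument from the Cohen--Macaulay bipartite graph $\widehat{G}$ back to $G$ through the strongly connected components $\mathcal{Z}_1, \ldots, \mathcal{Z}_t$ of $\mathfrak{d}_G$.

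For the reverse direction, choose $(r,\sigma)$ with $\beta_{r,\sigma}(I(\widehat{G})^{\ast}) \neq 0$ achieving the maximum in Proposition \ref{Kummini}, so $\pd(S/I(G)) = |\sigma^{\zeta}| - r$. I would first argue that this maximum can be taken at an extremal Betti number: if $(r,\sigma)$ is not extremal, there exist $j \geq r$ and $\tau \succ \sigma$ with $|\tau| - |\sigma| \geq j - r$ and $\beta_{j,\tau}(I(\widehat{G})^{\ast}) \neq 0$, and since
\[
  |\tau^{\zeta}| - |\sigma^{\zeta}| \;=\; \sum_{a} \zeta_a(\tau_a - \sigma_a) \;\geq\; |\tau| - |\sigma| \;\geq\; j - r,
\]
one has $|\tau^{\zeta}| - j \geq |\sigma^{\zeta}| - r$; by maximality equality holds, and iterating the argument yields an extremal maximizer. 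Applying Proposition \ref{extrem-CMbipartite} then produces a pairwise $3$-disjoint set $\widehat{\mathcal{B}} = \{\widehat{B}_1, \ldots, \widehat{B}_r\}$ of complete bipartite subgraphs of $\widehat{G}$ with $V(\widehat{\mathcal{B}}) = \sigma$, together with pairwise $3$-disjoint edges $\widehat{e}_k = \{u_{a_k}, v_{b_k}\} \in E(\widehat{B}_k)$.

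Next I would lift this datum to $G$. For each $a$, identify $u_a$ with the set $\{x_i : i \in \mathcal{Z}_a\}$ and $v_a$ with $\{y_j : j \in \mathcal{Z}_a\}$, and define $B_k$ to be the bipartite graph on $X_k \sqcup Y_k$ with
\[
  X_k = \bigsqcup_{a : u_a \in V(\widehat{B}_k)} \{x_i : i \in \mathcal{Z}_a\}, \qquad Y_k = \bigsqcup_{b : v_b \in V(\widehat{B}_k)} \{y_j : j \in \mathcal{Z}_b\}.
\]
Lift each $\widehat{e}_k$ to $e_k = \{x_{i_k}, y_{j_k}\}$ by choosing any $i_k \in \mathcal{Z}_{a_k}$ and $j_k \in \mathcal{Z}_{b_k}$. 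The verifications are: (a) each $B_k$ is a complete bipartite subgraph of $G$, since for $x_i \in X_k, y_j \in Y_k$ lying in components $\mathcal{Z}_{a_\alpha}, \mathcal{Z}_{b_\beta}$ with $a_\alpha \neq b_\beta$ the edge $\{u_{a_\alpha}, v_{b_\beta}\} \in E(\widehat{G})$ forces $a_\alpha b_\beta \in E(\widehat{\mathfrak{d}})$, hence $ij \in E(\mathfrak{d}_G)$ and $\{x_i, y_j\} \in E(G)$, while when $a_\alpha = b_\beta$ the edge is supplied by (CM1) (if $i = j$) or by strong connectivity of $\mathcal{Z}_{a_\alpha}$ (if $i \neq j$); (b) the $V(B_k)$ are pairwise disjoint since the $V(\widehat{B}_k)$ are and the $\mathcal{Z}_a$ are disjoint, and $\sum_k \#V(B_k) = |\sigma^{\zeta}|$ by construction; (c) the $3$-disjointness of $\widehat{e}_k, \widehat{e}_\ell$ in $\widehat{G}$ forces $a_k \neq b_\ell$ and $a_k b_\ell \notin E(\widehat{\mathfrak{d}})$, hence $\{x_{i_k}, y_{j_\ell}\} \notin E(G)$, and symmetrically for the other crossing edge. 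This yields $\pd(S/I(G)) = \#V(\mathcal{B}) - r$, completing the reverse inequality.

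The main obstacle I expect is the first step of the reverse direction, namely arguing that the \emph{twisted} maximum $|\sigma^{\zeta}| - r$ in Proposition \ref{Kummini} is realized by an extremal Betti number of $I(\widehat{G})^{\ast}$ (so that Proposition \ref{extrem-CMbipartite} actually applies at the maximizer). Once this reduction is in hand, the lifting construction and the verifications of the complete bipartite and $3$-disjointness conditions are combinatorial consequences of the transitivity of $\mathfrak{d}_G$, the definition of strongly connected components, and (CM1)--(CM3) applied to $\widehat{G}$.
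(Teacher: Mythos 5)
You have not proved the statement under review; you have proved Theorem \ref{unmixedProj}. The statement here is Proposition \ref{Kummini}, which computes $\pd(S/I(G))$ for an unmixed bipartite graph $G$ in terms of the multigraded Betti numbers of the Alexander dual $(I(\widehat{G}))^{\ast}$ of the edge ideal of the acyclic reduction, with each coordinate weighted by the size $\zeta_a$ of the corresponding strongly connected component. Your argument takes exactly this identity as an input (``I would combine Proposition \ref{Kummini}, \dots'') and deduces from it the description of $\pd(S/I(G))$ by pairwise $3$-disjoint sets of complete bipartite subgraphs; as a proof of Proposition \ref{Kummini} itself this is circular. What you have written is, essentially step for step, the paper's proof of Theorem \ref{unmixedProj}: the lower bound from Theorem \ref{MainIntro}, the reduction to an extremal Betti number via $|\tau^{\zeta}|-|\sigma^{\zeta}|\geq|\tau|-|\sigma|\geq s-r$, the application of Proposition \ref{extrem-CMbipartite} to $\widehat{G}$, and the lifting of each $\widehat{B}_k$ through the components $\mathcal{Z}_a$ using transitivity of $\mathfrak{d}_G$ and (CM1)--(CM3). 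That part is correct, but it is the wrong target.

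What Proposition \ref{Kummini} actually requires is the comparison between the minimal free resolution of $S/I(G)$ and that of $(I(\widehat{G}))^{\ast}$ -- i.e., a justification that collapsing each strongly connected component $\mathcal{Z}_a$ to a single vertex changes the projective dimension in the controlled way recorded by the substitution $\sigma\mapsto\sigma^{\zeta}$. None of the ingredients for that comparison (a relation between $I(G)$ and a suitable inflation of $I(\widehat{G})$, Alexander duality in the form of Theorem \ref{Adual}, or Kummini's own argument) appears in your proposal. Note also that the paper itself does not prove this proposition: it is quoted verbatim from Kummini's work, so there is no internal proof to reproduce -- but a blind proof attempt would have to supply one, and yours does not.
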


\par
Now we prove Theorem \ref{unmixedProj}. 
\begin{proof}[Proof of Theorem \ref{unmixedProj}]
  Take $\beta_{r, \sigma} ((I(\widehat{G}))^{\ast}) \neq 0$ which gives 
  $\pd (S/I(G))$. 
  We first prove that we may assume that 
  $\beta_{r, \sigma} ((I(\widehat{G}))^{\ast}) \neq 0$ is extremal. 
  
  \par
  Suppose that $\beta_{r, \sigma} ((I(\widehat{G}))^{\ast})$ 
  is not extremal. Then there exists 
  $\beta_{s, \tau} ((I(\widehat{G}))^{\ast}) \neq 0$ with 
  $s \geq r$, $\tau \succ \sigma$, and $|{\tau}| - |{\sigma}| \geq s-r$. 
  Note that both $\sigma$ and $\tau$ are $(0,1)$-vectors since 
  $(I(\widehat{G}))^{\ast}$ is a squarefree monomial ideal. 
  Thus we identify $\sigma$ and $\tau$ with subsets of the vertex set of 
  $\widehat{G}$ as before. 
  Then $\tau \succ \sigma$ implies $\tau \supsetneq \sigma$ and we have 
  \begin{displaymath}
    (|{\tau}^{\zeta}| - s) - (|{\sigma}^{\zeta}| - r) 
    = \sum_{u_{a} \in \tau \setminus \sigma} {\zeta}_{a} 
      + \sum_{v_{b} \in \tau \setminus \sigma} {\zeta}_b 
      - (s - r)
    \geq (|{\tau}| - |{\sigma}|) - (s-r) \geq 0. 
  \end{displaymath}
  Therefore we can replace 
  $\beta_{r, \sigma} ((I(\widehat{G}))^{\ast})$ 
  by $\beta_{s, \tau} ((I(\widehat{G}))^{\ast})$. 

  \par
  Let $\beta_{r, \sigma} ((I(\widehat{G}))^{\ast}) \neq 0$ be an 
  extremal Betti number which gives $\pd (S/I(G))$. 
  In order to prove the theorem, 
  it is sufficient to construct a pairwise $3$-disjoint set 
  $\mathcal{B} = \{ B_1, \ldots, B_r \}$ of complete bipartite subgraphs 
  of $G$ with $\# V(\mathcal{B}) = |{\sigma}^{\zeta}|$. 
  Since $\widehat{G}$ is Cohen--Macaulay bipartite graph, 
  there exists a pairwise $3$-disjoint set 
  $\widehat{\mathcal{B}} = \{ \widehat{B}_1, \ldots, \widehat{B}_r \}$ of 
  complete bipartite subgraphs of $\widehat{G}$ 
  by Proposition \ref{extrem-CMbipartite}. 
  Let $B_k$ be the complete bipartite graph with the vertex partition 
  \begin{displaymath}
      V(B_k) = 
      \left( \bigcup_{u_a \in V(\widehat{B}_k)} \{ x_p \; : \; p \in \mathcal{Z}_a \} \right) 
      \sqcup 
      \left( \bigcup_{v_b \in V(\widehat{B}_k)} \{ y_q \; : \; q \in \mathcal{Z}_b \} \right). 
  \end{displaymath}

  \par
  We first prove that $B_k$ is a subgraph of $G$, that is, 
  we prove that $\{ x_p, y_q \}$ is an edge of $G$ for 
  $p \in \mathcal{Z}_a$, $q \in \mathcal{Z}_b$ 
  with $u_a \in V(\widehat{B}_k), v_b \in V(\widehat{B}_k)$. 
  If $a=b$, then $p, q$ belongs to the same strongly connected component 
  of $\mathfrak{d}$. 
  Therefore $\{ x_p, y_q \} \in E(G)$. 
  Assume $a \neq b$. Since $\widehat{B}_k$ is a complete bipartite subgraph 
  of $\widehat{G}$, 
  $\{ u_a, v_b \}$ is an edge of $\widehat{G}$. Therefore 
  $ab$ is an edge of $\widehat{\mathfrak{d}}$. 
  Then it follows that $pq$ is an edge of $\mathfrak{d}$ and 
  that $\{ x_p, y_q \} \in E(G)$. 

  \par
  Since $\widehat{\mathcal{B}}$ is pairwise $3$-disjoint, 
  we can choose $\widehat{e}_k = \{ u_{a_k}, v_{b_k} \} \in E(\widehat{B}_k)$, 
  $k = 1, \ldots r$ 
  those are pairwise $3$-disjoint. 
  Take $p_k \in \mathcal{Z}_{a_k}$ and $q_k \in \mathcal{Z}_{b_k}$ for each $k$. 
  Then $e_k := \{ x_{p_k}, y_{q_k} \}$ is an edge of $B_k$. 
  If $\{ x_{p_k}, y_{q_{\ell}} \}$ is an edge of $G$ for $k \neq \ell$, 
  then $p_k q_{\ell}$ is an edge of $\mathfrak{d}$. 
  Note that $p_k \in \mathcal{Z}_{a_k}$, $q_{\ell} \in \mathcal{Z}_{b_{\ell}}$. 
  If $a_k = b_{\ell}$, then $\widehat{e}_k = \{ u_{b_{\ell}}, v_{b_k} \}$ and 
  $\widehat{e}_{\ell} = \{ u_{a_{\ell}}, v_{b_{\ell}} \}$. 
  This contradicts to the $3$-disjointness of $\widehat{e}_k$ and 
  $\widehat{e}_{\ell}$ because $\{ u_{b_{\ell}}, v_{b_{\ell}} \}$ is an edge 
  of $\widehat{G}$. 
  When $a_k \neq b_{\ell}$, it follows that $a_k b_{\ell}$ is an edge of 
  $\widehat{\mathfrak{d}}$. In particular, $\{ u_{a_k}, v_{b_{\ell}} \}$ 
  is an edge of $\widehat{G}$. This also contradicts to 
  the $3$-disjointness of $\widehat{e}_k$ and $\widehat{e}_{\ell}$. 
  Therefore we conclude that $e_1, \ldots, e_r$ are pairwise $3$-disjoint. 

  \par
  Since $V(\widehat{B}_k) \cap V(\widehat{B}_{\ell}) = \emptyset$, 
  we have $V(B_k) \cap V(B_{\ell}) = \emptyset$ for $k \neq \ell$. 
  Therefore $\mathcal{B} = \{ B_1, \ldots, B_r \}$ is a pairwise $3$-disjoint 
  set of complete bipartite subgraphs of $G$. 
  Since $|{\sigma}^{\zeta}| = \# V(\mathcal{B})$, 
  the assertion follows. 
\end{proof}

\begin{acknowledgement}
  The author thanks the referee for reading the manuscript carefully. 

  \par
  The author is partially supported 
  by JSPS Grant-in-Aid for Young Scientists (B) 24740008. 
\end{acknowledgement}


\end{document}